\newcommand{\N}{\mathbb{N}}
\newcommand{\Z}{\mathbb{Z}}
\newcommand{\R}{\mathbb{R}}
\newcommand{\C}{\mathbb{C}}
\newcommand{\F}{\mathbb{F}}
\newcommand{\G}{\mathbb{G}}
\newcommand{\T}{\mathbb{T}}
\newcommand{\mc}{\mathcal}
\newcommand{\id}{\text{id}}
\newcommand{\Tr}{\text{Tr}}
\newcommand{\Pol}{\text{Pol}}
\newtheorem{thm}{Theorem}[section]
\newtheorem{cor}[thm]{Corollary}
\newtheorem{lem}[thm]{Lemma}
\newtheorem{prop}[thm]{Proposition}
\theoremstyle{definition}
\newtheorem{defn}[thm]{Definition}
\theoremstyle{remark}
\newtheorem{rem}[thm]{Remark}
\newtheorem{notat}[thm]{Notation}
\numberwithin{equation}{section}
\begin{document}

\title[Quantum Groups and Generalized Circular Elements]
{Quantum Groups and Generalized Circular Elements} 

\date{\today}

\author{Michael Brannan and Kay Kirkpatrick}

\address{Michael Brannan and Kay Kirkpatrick: Department of Mathematics, University  of Illinois at Urbana-Champaign, Urbana, IL 61801, USA}
\email{mbrannan@illinois.edu, kkirkpat@illinois.edu}
\urladdr{http://www.math.uiuc.edu/~mbrannan,  http://www.math.uiuc.edu/~kkirkpat}

\keywords{Quantum groups, free probability, free Araki-Woods factor, free quasi-free state}
\thanks{2010 \it{Mathematics Subject Classification:}
\rm{ 46L54, 20G42 (Primary);  46L65 (Secondary)}}

\begin{abstract}
We show that with respect to the Haar state, the joint distributions of the generators of Van Daele and Wang's free orthogonal quantum groups are modeled by free families of generalized circular elements and semicircular elements in the large (quantum) dimension limit.  We also show that this class of quantum groups acts naturally as distributional symmetries of almost-periodic free Araki-Woods factors.
\end{abstract}
\maketitle

\section{Introduction} \label{section:intro}

There are intriguing connections between the representation theory of certain classes of compact matrix groups, and independent Gaussian structures in probability theory.  For instance, if one considers the $N^2$ matrix elements $\{u_{ij}\}_{1 \le i,j \le N}$ of the fundamental representation of the $N \times N$ orthogonal group $O_N = O_N(\R)$ on the Hilbert space $\C^N$, then it is well known that the joint moments of these variables with respect to the Haar probability measure are approximated by an independent and identically distributed, mean zero, variance $\frac1N$ family of real Gaussian random variables in the large $N$ limit (see for example \cite{DiFr87}) .  Intimately related to this asymptotic Gaussianity result is the celebrated theorem of Freedman \cite{Fr, DiFr87}, which says that an infinite sequence ${\bf x} = (x_n)_{n \in \N}$ of real-valued random variables is a conditionally independent centered Gaussian family with
common variance if and only if the sequence is \textit{rotatable}: i.e., for each $N \in \N$,  the joint distribution of the $N$-dimensional truncation ${\bf x}_N = (x_n)_{1 \le n \le N}$ of ${\bf x}$ is invariant under rotations by $O_N$.  

When one replaces the family of orthogonal groups $\{O_N\}_{N \in \N}$ by the unitary groups $\{U_N\}_{N \in \N}$, analogous results are known to hold where one replaces real Gaussian random variables by their complex-valued counterparts.  The key ingredient for the above results is a certain asymptotic orthonormality property for canonical generators (weighted Brauer diagrams, in fact) of the spaces of intertwiners between the tensor powers of the fundamental representations of these groups in the large rank limit.  This asymptotic feature of the representation theory is most concisely expressed via the so-called Weingarten calculus developed in \cite{Co03, CoSn06}, with origins in the pioneering work of Weingarten \cite{We} on the asymptotics of unitary matrix integrals.  For a broad treatment of probabilistic symmetries, we refer to the text \cite{Ka}.

Within the framework of operator algebras and non-commutative geometry, \textit{compact quantum groups} provide a vast and rich generalization of the theory of compact groups.  The operator algebraic theory of compact quantum groups was pioneered by Woronowicz (see \cite{Wo,Wo2} for instance) and has led recently to many interesting examples and developments in the theory of operator algebras.

One can ask non-commutative probabilistic questions about compact quantum groups, because every compact quantum group $\G$ admits a natural analogue of the Haar probability measure (the Haar state).  The last decade or so has seen a flurry of activity in this direction, particularly for free quantum groups and Voiculescu's free probability theory.  For instance, the free orthogonal quantum groups $O_N^+$ and free unitary quantum groups $U_N^+$ discovered by Wang \cite{Wa0} have interesting probabilistic statements that show a deep parallel with the aforementioned classical results for $O_N$ and $U_N$.  Most notable for our purposes are the results of Banica-Collins \cite{BaCo} and Curran \cite{Cu}.  Banica and Collins show that the rescaled matrix elements $\{\sqrt{N}u_{ij}\}_{1 \le i,j \le N}$ of the fundamental representation of $O_N^+$ (respectively $U_N^+$) converge in joint distribution to a freely independent family of standard--mean zero, variance one--semicircular (respectively circular) elements in a free group factor.  Curran provides a free probability analogue of Freedman's rotatability theorem:  An infinite sequence ${\bf x} = (x_n)_{n \in \N}$ of self-adjoint non-commutative random variables in a W$^\ast$-probability space $(M,\varphi)$ is \textit{quantum rotatable} if and only if there exists a W$^\ast$-subalgebra $B \subseteq M$ and a $\varphi$-preserving conditional expectation $E:M \to B$ such that ${\bf x}$ is an identically distributed family of mean zero semicircular elements that is free with amalgamation over $B$.  A similar result for $U_N^+$ is obtained in \cite{Cu}.  

In this paper, we consider similar non-commutative probabilistic questions for a broad class of compact quantum groups introduced by Van Daele and Wang \cite{VaWa} that generalizes the construction of $U_N^+$ and $O_N^+$ (which they called \textit{universal quantum groups}).  To define such a universal quantum group, one uses an invertible matrix $F \in \text{GL}_N(\C)$ to deform the defining algebraic relations for the quantum groups $O^+_N$ and $U^+_N$, yielding a pair of new compact matrix quantum groups called $O^+_F$ and $U^+_F$.  When $F = 1$ (the $N \times N$ identity matrix), we recover $O_N^+$ and $U_N^+$ as special cases (see Section \ref{section:freeorthogonal} for precise definitions). We follow recent literature conventions and refer to $O^+_F$ as \textit{free orthogonal quantum groups} and $U^+_F$  as \textit{free unitary quantum groups} (both with parameter matrix $F$).

The quantum groups $\G = O^+_F,U^+_F$ (at least when $F$ is not a multiple of a unitary matrix) are especially interesting because their Haar states are non-tracial and because the corresponding quantum group von Neumann algebras $L^\infty(\G)$ are known to be type III factors in many cases (see \cite{DeFrYa, VaVe}).  

The main result of this paper is that in this far more general (possibly non-tracial) setting, asymptotic freeness still emerges in the large rank limit.  In theorems \ref{thm:asymptotic-freeness-general}, \ref{thm:asymptotic-freeness-large-rank} and \ref{thm:asymptotic-freeness-large-qrank}, we show that the joint distribution of the (suitably rescaled) matrix elements of the fundamental representation of the quantum group $O^+_F$ can be approximated by a freely independent family of non-commutative random variables consisting of semicircular elements and Shlyakhtenko's \textit{generalized circular elements} \cite{Sh}, which is built in a natural way from the initial data $F \in \text{GL}_N(\C)$.  Generalized circular elements are non-tracial deformations of Voiculescu's circular elements, and they arise as canonical generators of free Araki-Woods factors \cite{Sh}.  Since free Araki-Woods factors are the natural non-tracial (or type III) deformations of the free group factors, our asymptotic freeness results
for $O^+_F$ provide a satisfactory generalization of the tracial asymptotic freeness results of \cite{BaCo}.  We also remark in Section \ref{sect:unitary} how similar asymptotic freeness results can be obtained for the free unitary quantum groups $U^+_F$. 

Our proofs of asymptotic freeness results in $O^+_F$ and $U^+_F$ follow the general outline of the earlier work of \cite{BaCo, CoSn06}, and we use a modified version of the Weingarten calculus for our situation.  In our case, the formulas become a bit more unwieldy, a consequence of the extra parameters that arise from the non-trivial matrix $F \in \text{GL}_N(\C)$.  On the other hand, there is one significant and interesting difference between our (non-tracial) setting and the earlier asymptotic (free) independence results on groups and quantum groups where the Haar state is tracial.  In the case of $O^+_F$ and $U^+_F$, $F \in GL_N(\C)$, we find that the error in approximation of joint moments by free variables is of order $O\Big((\text{Tr}(F^*F))^{-1}\Big)$, a bound that is in many cases much smaller than the traditional bound given by $O\Big(\frac{1}{N}\Big)$ in the classical case.  This fact allows us to observe, for example, asymptotic freeness results in a fixed dimension $N$, by considering families of quantum groups $\{O^+_F\}_{F \in \Lambda \subset \text{GL}_N(\C)}$ where the {\it quantum dimension} $\text{Tr}(F^*F)$ tends to infinity (cf. Theorem \ref{thm:asymptotic-freeness-large-qrank}).

Based on our non-tracial asymptotic freeness results described above, together with Curran's work on quantum rotatability \cite{Cu}, it now becomes natural to ask whether the free quantum groups $O^+_F$ act non-trivially on free Araki-Woods factor in a free-quasi-free state-preserving way.  In Section \ref{qsym},  we answer this question in the affirmative, and as a result we observe that almost-periodic free Araki-Woods factors admit a wealth of  quantum symmetries.   A future goal of the authors is to find a suitable ``type III'' version of Freedman's theorem adapted to $O^+_F$, $U^+_F$, and free Araki-Woods factors.  After a first version of this paper appeared, it was pointed out to the authors that the main result of Section \ref{qsym}, namely Theorem \ref{thm:invariance-canonicalF}, can also be obtained as a special case of a very general result of S. Vaes  \cite[Proposition 3.1]{Va05}.  

We finish this section with an outline of paper's organization.  Section \ref{section:prelim} discusses some preliminaries on quantum groups and free probability that are required.  Section \ref{section:freeorthogonal}
defines the quantum groups $O^+_F$ and $U^+_F$ and gives Weingarten-type formulas for  joint moments of the generators of $O^+_F$ with respect to the Haar state.  Section  \ref{section:freeness} gives various asymptotic freeness results for $O^+_F$, and includes a remark on how to extend our results on $O^+_F$
to some of their unitary counterparts $U^+_F$.  Finally Section \ref{qsym}
considers $O^+_F$ as quantum symmetries of almost-periodic free Araki-Woods factors.  This is achieved by associating to each $O^+_F$ a canonical free family of generalized circular elements whose joint distribution is invariant under quantum rotations by $O^+_F$.

\subsection*{Acknowledgements}  M.B. was partially supported by an NSERC Postdoctoral Fellowship. K.K. was partially supported by NSF Grant DMS-1106770 and NSF CAREER Award DMS-1254791.  The authors wish to thank Todd Kemp for encouraging the authors to collaborate.

\section{Preliminaries} \label{section:prelim}

In this section we briefly review some concepts from free probability theory and compact quantum group theory.  For more details, we refer the reader to \cite{NiSp} for free probability and \cite{Ti, Wo2} for quantum groups.

\subsection{Non-commutative probability spaces and free independence}

A \textit{non-commutative probability space (NCPS)} is a pair $(A,\varphi)$, where $A$ is
a unital C$^\ast$-algebra, and $\varphi: A \to \C$ is a state (i.e. a linear functional such that
$\varphi(1_A) = 1$ and $\varphi(a^*a) \ge 0$ for all $a \in A$).  Elements $a \in A$ are called \textit{random variables}.  Given a family of random variables $X = \{x_r\}_{r \in \Lambda} \subset (A,\varphi)$, the \textit{joint distribution} of $X$ is the collection of all {\it joint $\ast$-moments} \[\{\varphi(P((x_{r})_{r \in \Lambda}): P \in 
\C\langle t_r, t_r^*: r \in \Lambda \rangle \},\] where $\C\langle t_r, t_r^*: r \in \Lambda \rangle$ is the unital $\ast$-algebra of non-commutative polynomials in the variables $\{t_r\}_{r \in \Lambda}$, equipped with anti-linear involution $t_r \mapsto t_r^*$.  Given another family or random variables $Y = \{y_r\}_{r \in \Lambda}$ in a NCPS $(B,\psi)$, we say that $X$ and $Y$ are \textit{identically distributed} if $\varphi(P((x_{r})_{r \in \Lambda}) = \psi(P((y_{r})_{r \in \Lambda})$ for all $P \in \C\langle t_r, t_r^*: r \in \Lambda \rangle$.

Let $(A,\varphi)$ be a NCPS.  A family of $\ast$-subalgebras $\{A_r\}_{r \in \Lambda}$ of $A$ is said to be \textit{freely independent} (or simply \textit{free}) if the following condition holds: for any choice of indices $r(1) \ne r(2), r(2) \ne r(3), \ldots , r(k -1 ) \ne r(k) \in \Lambda$ and any choice of centered random variables variables $x_{r(j)} \in A_{r(j)}$ (i.e., $\varphi(x_{r(j)}) = 0$), we have the equality
\[\varphi(x_{r(1)}x_{r(2)} \ldots x_{r(k)}) = 0.\]   

A family of random variables $X = \{x_r\}_{r \in \Lambda} \subset (A,\varphi)$ is said to be \textit{free} if the family of unital $\ast$-subalgebras
\[\{A_r\}_{r \in \Lambda}; \qquad  A_r := \text{alg}\big( 1,x_r, x_r^*\big),\]
is free in the above sense.   
Let $S_\alpha = \{x_r^{(\alpha)}\}_{r \in \Lambda} \subset (A_\alpha,\varphi_\alpha)$ be a net of families of random variables and $S = \{x_r\}_{r \in \Lambda} \in (A,\varphi)$ be another family of random variables.  We say that \textit{$S_\alpha$ converges to $S$ (or $S_\alpha \to S$) in distribution} if, for
any non-commutative polynomial $P \in \C\langle X_r: r \in \Lambda \rangle$, we have
\[\lim_{\alpha} \varphi_\alpha(P(S_\alpha)) = \varphi(P(S)).\] 


\subsection{Fock spaces, semicircular elements, and  generalized circular elements} \label{section:FAW}

Let $H$ be a complex Hilbert space.  The \textit{full fock space} is the Hilbert space 
\[
\mc F(H) = \bigoplus_{n =0}^\infty H^{\otimes n}, 
\] 
where we put $H^{\otimes 0}: = \C \Omega$, where $\Omega$ is a fixed unit vector, called the {\it vacuum vector}.   The {\it vacuum expectation} is the state $\varphi_{\Omega}: \mc B(\mc F(H)) \to \C$ given by $\varphi_\Omega(x)  = \langle  \Omega |x\Omega \rangle$, $x \in \mc B(\mc F(H))$.

For each $\xi \in H$, we define the \textit{left creation operator} $\ell(\xi) \in \mc B(\mc F(H))$
by  
\[
\ell(\xi) \Omega = \xi, \quad \ell(\xi) \eta = \xi \otimes \eta \qquad (\eta \in H^{\otimes n}, \ n \ge 1).
\]  Note that $\|\ell(\xi)\|_{\mc B(\mc F(H))} = \|\xi\|_H$.  Given a NCPS $(A,\varphi)$, a {\it (standard) semicircular element} is a self-adjoint random variable $x \in A$ with the same distribution as $s(\xi):= \ell(\xi)+\ell(\xi)^* \in (\mc B (\mc F(H)), \varphi_\Omega)$, where $\xi \in H$ is a unit vector.  Given $\alpha, \beta \in \R^+$, a random variable $x \in (A,\varphi)$ is called an {\it  $(\alpha, \beta)$-generalized circular element} if it has the same distribution as the element $\alpha \ell(\xi) + \beta \ell(\eta)^* \in (\mc B (\mc F(H)), \varphi_\Omega)$, where $\xi,\eta$ are orthonormal vectors in $H$.  One can readily verify that for an $(\alpha, \beta)$-generalized circular element $x$, one has
\[\varphi(x^*x) = \alpha^2 \quad \& \quad \varphi(xx^*) = \beta^2, \] and this information
completely determines the $\ast$-moments of $x$ with respect to $\varphi$.  We will call the numbers $\alpha^2$ and $\beta^2$ the \text{left and right variances} of $x$, respectively.  

Next, we want state a well known theorem which gives a combinatorial characterization of the joint distribution of a free semicircular or generalized circular family.  To do this, we first need some notation concerning non-crossing partitions that will be used below and throughout the remainder of the paper.

\begin{notat}
Let $k \in \N$ and denote by $[k]$ the ordered set $\{1, \ldots, k\}$. 
\begin{enumerate}
\item The lattice of (non-crossing) partitions of $[k]$ will be denoted by $\mc P(k)$ (resp. $NC(k)$), and the standard partial order on both lattices will be denoted by $\le$.  
\item If $\pi \in \mc P(k)$ partitions $[k]$ into $r$ disjoint, non-empty subsets $\mc V_1,\ldots, \mc V_r$ (called {\it blocks}), we write $|\pi| = r$ and say that {\it $\pi$ has $r$ blocks}.   
\item Given a function $i:[k] \to \Lambda$, we denote by $\ker i$ the element of $\mathcal P(k)$ whose blocks are the equivalence classes of the
relation $$s \sim_{\ker i} t \iff i(s) = i(t).$$ Note that
if $\pi  \in \mathcal P(k)$, then $\pi \le \ker i$ is equivalent to
the condition that whenever $s$ and $t$ are
in the same block of $\pi$, $i(s)$ must equal $i(t)$ (i.e the
 function $i:[k] \to \Lambda$ is constant on the blocks of $\pi$).  
\item Elements of $\mc P(k)$ which partition $[k]$ into subsets with exactly two elements are called {\it pairings} and the set of pairings of $[k]$ is denoted by $\mc P_2(k)$.  We also write $NC_2(k) = \mc P_2(k) \cap NC(k)$.  If $k$ is odd, we of course have $\mc P_2(k) =NC_2(k)= \emptyset$.
\item Given $\pi \in \mc P_2(k)$ and $s,t \in [k]$, we will always write  $(s,t) \in \pi$ if $\{s,t\}$ is a block of $\pi$ and $s < t$.  
\item Let $\epsilon:[k] \to \{1,\ast\}$ be a function.  We let $NC_2^\epsilon(k) \subset NC_2(k)$ be the subset of all non-crossing pairings such that \[\forall (s,t) \in \pi \implies \epsilon(s) \ne \epsilon(t).\]
\end{enumerate}
\end{notat}

\begin{thm}[See Chapters 7 and 15 of \cite{NiSp}] \label{thm:mc-formulas}
Let $X = (x_r)_{r \in \Lambda}$ be a family of random variables in a NCPS $(A,\varphi)$.
\begin{enumerate}
\item If $x_r = x_r^*$ for each $r \in \Lambda$, then $X$ is a free family of standard semicircular variables if and only if 
\begin{align*}\varphi(x_{r(1)}\ldots x_{r(k)}) &=\sum_{\substack{\pi \in NC_2(k) \\
\ker r \ge \pi}} \prod_{(s,t) \in \pi} \varphi(x_{r(s)}x_{r(t)}) \\
&= |\{\pi \in NC_2(k): \ker r \ge \pi\}| \qquad(k \in \N, \ r:[k] \to \Lambda).
\end{align*}
\item Let $(\alpha_r, \beta_r)_{r \in \Lambda} \subset \R^+\times \R^+$.  Then $X$ is a free family of $(\alpha_r,\beta_r)$-generalized circular elements if and only if 
\[\varphi(x_{r(1)}^{\epsilon(1)}\ldots x_{r(k)}^{\epsilon(k)}) = \sum_{\substack{\pi \in NC_2^{\epsilon}(k) \\
\ker r \ge \pi}} \prod_{(s,t) \in \pi} \varphi(x_{r(s)}^{\epsilon(s)}x_{r(t)}^{\epsilon(t)}) \qquad (k \in \N, \ r:[k] \to \Lambda, \ \epsilon:[k] \to \{1,\ast\}).\]   
\end{enumerate}
\end{thm}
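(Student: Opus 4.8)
The plan is to reduce everything to the free cumulant formalism of Speicher, as developed in \cite{NiSp}. For a family of random variables in a NCPS one has the free cumulants $(\kappa_n)_{n\geq1}$, multilinear functionals obtained from the moments by M\"obius inversion over the lattices $NC(k)$, and the two facts we will use are: (i) the moment--cumulant formula $\varphi(a_1\cdots a_k)=\sum_{\pi\in NC(k)}\prod_{V\in\pi}\kappa_{|V|}\big((a_i)_{i\in V}\big)$, which is a bijection between moment sequences and cumulant sequences; and (ii) a family of unital $\ast$-subalgebras is free if and only if all \emph{mixed} free cumulants (those with arguments drawn from at least two different subalgebras) vanish.

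First I would compute the $\ast$-cumulants of the model operators on the full Fock space. For a unit vector $\xi\in H$, expanding $\varphi_\Omega\big((\ell(\xi)+\ell(\xi)^*)^k\big)$ one sees a word in $\ell(\xi)$ and $\ell(\xi)^*$ has nonzero vacuum expectation exactly when its creation/annihilation pattern is a non-crossing nesting, each nested pair contributing $\langle\xi|\xi\rangle=1$, so the value is $|NC_2(k)|$. Likewise, for orthonormal $\xi,\eta$ and $x=\alpha\ell(\xi)+\beta\ell(\eta)^*$, expanding $\varphi_\Omega\big(x^{\epsilon(1)}\cdots x^{\epsilon(k)}\big)$ over the $2^k$ choices of ``creation term'' versus ``annihilation term'' in each slot, the surviving terms are indexed by non-crossing pairings in which each block $(s,t)$ opens with a creation at $t$ and closes with the matching annihilation at $s$; orthonormality of $\{\xi,\eta\}$ forces $\epsilon(s)\neq\epsilon(t)$ on every block and identifies its weight with $\varphi(x^{\epsilon(s)}x^{\epsilon(t)})$, namely $\alpha^2=\varphi(x^*x)$ or $\beta^2=\varphi(xx^*)$. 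In both cases these identities are precisely the formula (i) applied to a cumulant sequence vanishing outside degree two, with $\kappa_2(x,x)=\kappa_2(x^*,x^*)=0$, $\kappa_2(x^*,x)=\varphi(x^*x)$, $\kappa_2(x,x^*)=\varphi(xx^*)$ (and $\kappa_2=1$ for $s(\xi)$); by the uniqueness in (i) these are exactly the $\ast$-cumulants of the model elements.

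The forward implications now follow by combining the two inputs. If $X=(x_r)$ is a free family of standard semicircular (resp.\ $(\alpha_r,\beta_r)$-generalized circular) elements, each $x_r$ has only the degree-two cumulants just computed, and by (ii) freeness annihilates every cumulant whose arguments involve two distinct indices; hence the only nonvanishing cumulants of $X\cup X^*$ are the within-variable second-order ones, and expanding (i) leaves exactly the sum over $\pi\in NC_2(k)$ with $\ker r\geq\pi$ (resp.\ $\pi\in NC_2^{\epsilon}(k)$ with $\ker r\geq\pi$), each block weighted by the corresponding two-point function; in the semicircular case every weight is $1$, giving the cardinality formula. The converse is the same computation read backwards: from the $k=2$ instances of the asserted formula one reads off $\varphi(x_r^2)=0$ and nonnegative numbers $\alpha_r^2:=\varphi(x_r^*x_r)$, $\beta_r^2:=\varphi(x_rx_r^*)$, and the formula then exhibits all joint $\ast$-moments of $X$ as the moment--cumulant expansion of a cumulant sequence supported on within-variable second-order terms; by (ii) this forces the family to be free, and by (i) its marginals are those of the stated generalized circular (resp.\ semicircular) elements, which is what it means for $X$ to be such a free family.

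The step I expect to be the main obstacle is the Fock-space bookkeeping in the second paragraph: pinning down precisely which creation/annihilation words survive the vacuum expectation, verifying that the surviving patterns are exactly the non-crossing pairings, and checking that the orthonormality hypothesis on $\{\xi_r,\eta_r\}$ translates cleanly into the constraint $\epsilon(s)\neq\epsilon(t)$ together with the identification of each block weight with $\varphi(x_{r(s)}^{\epsilon(s)}x_{r(t)}^{\epsilon(t)})$. Everything after that is formal manipulation of the moment--cumulant formula and the mixed-cumulant criterion for freeness.
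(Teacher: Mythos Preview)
The paper does not actually prove this theorem; it merely states it and cites Chapters~7 and~15 of \cite{NiSp}. Your proposed argument is precisely the standard Nica--Speicher approach that citation points to: identify the free cumulants of the model Fock-space operators as being supported in degree two (via the creation/annihilation combinatorics you outline), then run the moment--cumulant formula together with the vanishing-of-mixed-cumulants criterion for freeness in both directions. The Fock-space bookkeeping you flag as the potential obstacle is routine and works exactly as you describe, so there is no gap.
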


\subsection{Free Araki-Woods factors and generalized circular elements}

Let $H_\R$ be a real separable Hilbert space  and let $(U_t)$ be an orthogonal representation of $\R$ on $H_\R$. Let $H = H_\R \otimes_\R \C$ be
the complexified Hilbert space.  If $A$ is the infinitesimal generator of (the extension of) $U_t$ on $H$  (i.e., $U_t = A^{it}$), then it follows that the map  $j:H_\R \hookrightarrow H$ defined
by $j(\xi) = \big(\frac{2}{A^{-1}+1}\big)^{1/2}\xi$ is an isometric embedding of $H_\R$ into $H$  \cite{Sh}. Let $K_\R = j(H_\R)$, then we have $K_\R \cap iK_\R = \{0\}$ and
$K_\R + iK_\R$ is dense in $H$. The {\it free Araki-Woods factor} is the von Neumann algebra 
\[\Gamma(H_\R, U_t)''  = W^*(\ell(\xi) +\ell(\xi)^*: \xi \in K_\R) \subseteq \mc B(\mc F(H)).\] 
The restriction of the vacuum expectation $\varphi_\Omega = \langle \Omega |\cdot \Omega\rangle$ on $\mc B(\mc F(H))$ to $\Gamma(H_\R, U_t)''$ is always a faithful normal state, and turns $(\Gamma(H_\R, U_t)'', \varphi_\Omega)$ into a non-commutative probability space.

We recall from \cite{Sh} that $U_t$ is the trivial representation if and only if $\Gamma(H_\R, U_t)'' \cong L(\F_{\dim H_\R})$, the von Neumann algebra generated by the left regular representation of the free group on $\dim H_\R$ generators.  Otherwise, $\Gamma(H_\R, U_t)''$ is a type III factor.   

Free Araki-Woods factors arise naturally when one considers free families of generalized circular elements that we introduced earlier.  More precisely, we have the following theorem, which follows easily from the results in \cite[Section 6]{Sh}.

\begin{thm}[\cite{Sh}] \label{thm:repfromgenerators}
Let $X = (x_r)_{r \in \Lambda}$ be a free family of $(\alpha_r,\beta_r)$-generalized circular elements in a non-commutative probability space $(A,\varphi)$ and let $0 < \lambda_r = \min\{\alpha \beta^{-1}, \beta \alpha^{-1}\} \le 1$.  Then there is a state-preserving $\ast$-isomorphism $(W^*(X), \varphi) \cong (\Gamma(H_\R, U_t)'', \varphi_\Omega)$, where 
$U_t$ is the almost-periodic orthogonal representation acting on the Hilbert space $H_\R = \bigoplus_{r \in \Lambda} \R^2$ given by \[U_t  = \bigoplus_{r \in \Lambda} R_{\lambda_r}(t) \quad \text{where} \quad R_{\lambda_r}(t) = \left[\begin{matrix} 
\cos(t\log {\lambda_r}) & -\sin(t\log {\lambda_r})\\
\sin (t \log {\lambda_r}) & \cos (t \log {\lambda_r})
\end{matrix}\right]. \] 
Moreover, every free Araki-Woods factor $\Gamma(H_\R, U_t)''$ arising from an almost-periodic representation $U_t$ arises in this fashion.
\end{thm}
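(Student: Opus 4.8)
The plan is to reduce the statement to the case of a single generalized circular element and then assemble the general case by freeness, and finally to run the argument backwards for the converse. The key single-variable input, which is essentially the content of \cite[Section 6]{Sh}, is this: if $x$ is an $(\alpha,\beta)$-generalized circular element and $\lambda = \min\{\alpha\beta^{-1},\beta\alpha^{-1}\}\in(0,1]$, then there is a state-preserving $\ast$-isomorphism $(W^*(x),\varphi)\cong(\Gamma(\R^2,R_\lambda(t))'',\varphi_\Omega)$. To see this one unwinds Shlyakhtenko's embedding $j\from H_\R\hookrightarrow H$ for $H_\R=\R^2$, $U_t=R_\lambda(t)$: the complexified generator $A$ has eigenvalues $\lambda^{\pm1}$, so $j=(2(A^{-1}+1)^{-1})^{1/2}$ is explicit on the two eigenlines, and a short computation with left creation operators on the two-dimensional space $H$ exhibits inside $\Gamma(\R^2,R_\lambda(t))''$ an $(\alpha_0,\beta_0)$-generalized circular element $G_\lambda$ (for an explicit pair with $\alpha_0\beta_0^{-1}=\lambda$) that generates it. Rescaling, $cG_\lambda$ with $c=\alpha/\alpha_0=\beta/\beta_0$ is $(\alpha,\beta)$-generalized circular and still generates $\Gamma(\R^2,R_\lambda(t))''$; since the $\ast$-distribution of a generalized circular element is determined by its left and right variances, and $(W^*(x),\varphi)$ depends only on the $\ast$-distribution of $x$ through the GNS construction (with $\varphi_\Omega$ faithful on the free Araki-Woods factor), one may identify $x$ with $cG_\lambda$. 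When $\lambda=1$ this is just Voiculescu's fact that a circular element generates $L(\F_2)=\Gamma(\R^2,\mathrm{triv})''$.

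For the general family I would build an explicit model on $\mc F(H)$ with $H=\bigoplus_{r\in\Lambda}\C^2$ and $U_t=\bigoplus_r R_{\lambda_r}(t)$. By the single-variable analysis, the $r$-th two-dimensional summand carries a rescaled copy $g_r$ of its canonical generator; $g_r$ is $(\alpha_r,\beta_r)$-generalized circular, lies in $\Gamma(H_\R,U_t)''$, and $W^*(g_r)=W^*(\ell(\zeta)+\ell(\zeta)^*\from \zeta\in K_\R\cap \C^2_{(r)})$ is exactly the piece of $\Gamma(H_\R,U_t)''$ coming from the $r$-th block (using that $j$ is block-diagonal). Because the summands $\C^2_{(r)}$ are mutually orthogonal, the standard fact that operators built from creation operators on orthogonal subspaces of a full Fock space are free with respect to the vacuum gives that $(g_r)_{r\in\Lambda}$ is a free family; so it is a free family of $(\alpha_r,\beta_r)$-generalized circular elements, and $W^*((g_r)_r)=\Gamma(H_\R,U_t)''$. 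By Theorem \ref{thm:mc-formulas}(2), $X$ and $(g_r)_r$ have the same joint $\ast$-distribution, so the GNS construction produces a state-preserving $\ast$-isomorphism $(W^*(X),\varphi)\cong(W^*((g_r)_r),\varphi_\Omega)=(\Gamma(H_\R,U_t)'',\varphi_\Omega)$. (Equivalently, one can phrase the gluing through Shlyakhtenko's identification of $\Gamma$ of a direct sum with the free product of the summands.)

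For the converse, I would invoke the structure theory of almost-periodic orthogonal representations: the generator of such a $U_t$ has pure point spectrum, symmetric under $s\mapsto s^{-1}$ (since complex conjugation conjugates $A^{it}$ to $A^{-it}$), so $(H_\R,U_t)$ decomposes as an orthogonal direct sum of one-dimensional trivial subspaces and two-dimensional subspaces on which $U_t$ acts as a rotation $R_\lambda(t)$, $\lambda\in(0,1)$. Grouping the trivial one-dimensional pieces in pairs into $R_1(t)$-blocks, one writes $(H_\R,U_t)\cong(\bigoplus_{r\in\Lambda}\R^2,\bigoplus_r R_{\lambda_r}(t))$, and the family $(g_r)_{r\in\Lambda}$ constructed above is then a free family of generalized circular elements with $W^*((g_r)_r)=\Gamma(H_\R,U_t)''$; thus $U_t$ almost-periodic is equivalent to arising in this fashion.

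The main obstacle is the single-block computation underlying the first paragraph: one must check both that $\Gamma(\R^2,R_\lambda(t))''$ \emph{contains} an $(\alpha,\beta)$-generalized circular element with $\lambda=\min\{\alpha\beta^{-1},\beta\alpha^{-1}\}$ and that such an element \emph{generates} it, which forces tracking the operator $(2(A^{-1}+1)^{-1})^{1/2}$ on the two eigenlines and seeing how the single generalized circular generator recovers the quasi-free semicircular generators $\ell(\zeta)+\ell(\zeta)^*$, $\zeta\in K_\R$. Everything else — freeness from orthogonality of the Fock summands, matching joint $\ast$-distributions via Theorem \ref{thm:mc-formulas}, the GNS gluing, and the spectral decomposition in the converse — is routine, and the first paragraph is "easy given \cite[Section 6]{Sh}" precisely because Shlyakhtenko already carried out that bookkeeping.
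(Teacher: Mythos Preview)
The paper does not actually prove this theorem; it simply states it and remarks that it ``follows easily from the results in \cite[Section 6]{Sh}.''  Your proposal is exactly the natural expansion of that remark---handle a single block $(\R^2,R_\lambda(t))$ by explicit Fock-space bookkeeping to produce a generating $(\alpha,\beta)$-generalized circular element, glue blocks via freeness-from-orthogonality on the full Fock space, match joint $\ast$-distributions using Theorem~\ref{thm:mc-formulas}(2), and transfer through GNS---so there is nothing in the paper to compare against, and your outline is correct.

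One small point on your converse: ``grouping the trivial one-dimensional pieces in pairs into $R_1(t)$-blocks'' presupposes that the trivial summand of $(H_\R,U_t)$ is even- or infinite-dimensional.  When it is odd and finite (e.g.\ $H_\R=\R^{2m+1}$ with $U_t$ trivial, so $\Gamma(H_\R,U_t)''\cong L(\F_{2m+1})$), the construction in the theorem only ever yields $L(\F_{2k})$, $L(\F_\infty)$, or type III factors, so the ``moreover'' clause as literally written does not cover this edge case.  This is an imprecision in the stated theorem rather than a defect of your approach; the formulations in \cite{Sh} are arranged to avoid it.
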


\subsection{Compact quantum groups}

A compact quantum group is a pair $\G = (C(\G),\Delta )$ where $C(\G)$ is a unital
$C^*$-algebra and $\Delta : C(\G)\to C(\G)\otimes C(\G)$ is a unital $*$-homomorphism
satisfying
\begin{align*}
  &(\iota \otimes \Delta )\Delta = (\Delta \otimes \iota)\Delta \quad\mbox{   (coassociativity)}\\
  &[\Delta (C(\G))(1\otimes C(\G))]=[\Delta (C(\G)) (C(\G)\otimes 1)]=C(\G)\otimes C(\G) \quad \mbox{
    (non-degeneracy)},
\end{align*}
where $[S]$ denotes the norm-closed linear span of a subset $S\subset C(\G) \otimes
C(\G)$. Here and in the rest of the paper,  the symbol $\otimes$ will denote the minimal tensor product of C$^\ast$-algebras, $\overline{\otimes}$ will denote the spatial tensor product of von Neumann algebras, and $\odot$ will denote the algebraic tensor product of complex associative algebras.  The homomorphism $\Delta$ is called a \textit{coproduct}.  


For any compact quantum group $\G = (C(\G),\Delta)$, there exists a unique
\textit{Haar state} $h_{\G}: C(\G)\to \mathbb{C}$ which satisfies the following left and
right $\Delta$-invariance property, for all $a\in C(\G)$:
\begin{equation} \label{Haar} (h_{\G}\otimes \iota)\Delta (a)=(\iota\otimes h_{\G})\Delta
  (a)=h_{\G}(a)1.
\end{equation}
Note that in general $h = h_\G$ is not faithful on $C(\G)$.  In any
case, we can construct a GNS representation $\pi_h:C(\G) \to \mc B(L^2(\G))$, where
$L^2(\G)$ is the Hilbert space obtained by separation and completion of $C(\G)$ with
respect to the sesquilinear form $\langle a |b \rangle = h(a^*b),$ and $\pi_h$
is the natural extension to $L^2(\G)$ of the left multiplication action of $C(\G)$
on itself. 
 The \textit{von Neumann
  algebra of $\G$} is given by
\[L^\infty(\G) = \pi_h(C(\G))'' \subseteq \mc B(L^2(\G)).\] We note that $\Delta_r$
extends to an injective normal $\ast$-homomorphism $\Delta_r:L^\infty(\G)\to
L^\infty(\G) \overline{\otimes} L^\infty(\G)$, and the Haar state on $C(\G)$
lifts to a faithful normal $\Delta_r$-invariant state on $L^\infty(\G)$.

Let $H$ be a finite dimensional Hilbert space and $U\in \mc B(H)\otimes C(\G)$ be invertible (unitary). Then $U$ is called a \textit{(unitary)
  representation} of $\G$ if, following the leg numbering convention,
\begin{equation} \label{rep} (\iota \otimes \Delta) U=U_{12}U_{13}.
\end{equation}  
If we fix an orthonormal basis of $H$, we can
identify $U$ with an invertible matrix $U = [u_{ij}] \in M_N(C(\G))$ and \eqref{rep}
means exactly that
\[
\Delta(u_{ij}) = \sum_{k=1}^N u_{ik} \otimes u_{kj} \qquad (1 \le i,j \le N).
\]
Of course the unit $1 \in C(\G)$ is always a representation of $\G$, called the
\textit{trivial representation}.

Let $U \in \mc B(H_1)\otimes C(\G)$ and $V \in \mc B(H_2)\otimes C(\G)$ be two
representations of $\G$.  An \textit{intertwiner} between $U$ and
$V$ is a bounded linear map $T:H_1 \to H_2$ such that $(T \otimes \iota )U = V
(T \otimes \iota)$.  The Banach space of all such intertwiners is denoted by
$\text{Hom}_\G(U, V)$.  When $U = 1$ is the trivial representation, we write $\text{Hom}_\G(U, V) = \text{Fix}(V) \subset H_2$, and call $\text{Fix}(V)$ the \textit{space of fixed vectors} for $V$.  If there exists an invertible (unitary) intertwiner
between $U$ and $V$, they are said to be \textit{(unitarily) equivalent}. A
representation is said to be irreducible if its only self-intertwiners are the
scalar multiples of the identity map.  It is known that each irreducible
representation of $\G$ is finite dimensional and every finite dimensional
representation is equivalent to a unitary representation. In addition, every
unitary representation is unitarily equivalent to a direct sum of irreducible
representations.

A compact quantum group $\G$ is called a \textit{compact matrix quantum group}
if there exists a finite dimensional unitary representation $U = [u_{ij}] \in
M_N(C(\G))$ whose matrix elements generate $C(\G)$ as a C$^\ast$-algebra.  Such a
representation $U$ is called a \textit{fundamental representation} of $\G$.  In
this case, we note that the Haar state $h$ is faithful when restricted to the dense unital $\ast$-subalgebra $\Pol(\G) \subseteq C(\G)$ generated by $\{u_{ij}\}_{1 \le  i,j \le N}$.  


\section{The free quantum groups $O_F^+$ and $U^+_F$} \label{section:freeorthogonal}

In this section we recall the definition of the free orthogonal and unitary quantum groups $O^+_F$ and $U^+_F$, introduced by Van Daele and Wang in \cite{VaWa}.  

\begin{notat}
Given a complex $\ast$-algebra $A$ and a matrix $X = [x_{ij}] \in M_N(A)$, we denote by $\bar X$ the matrix $[x_{ij}^*] \in M_N(A)$.
\end{notat}

\begin{defn}[\cite{VaWa}] Let $N \ge 2$ be an integer and let $F \in \text{GL}_N(\C)$.
\begin{enumerate}
\item   The \textit{free unitary quantum group} $U_F^+$ (with parameter matrix $F$) is the compact quantum group given by the universal C$^\ast$-algebra 
\begin{align} \label{eqn:defining} C(U^+_F) = C^*\big(\{v_{ij}\}_{1 \le i,j \le N} \ | \ U = [v_{ij}] \text{ is unitary } \& \ F \bar U F^{-1} \text{ is unitary } \big),\end{align} together with coproduct $\Delta:C(U^+_F) \to C(U^+_F) \otimes C(U_F^+)$ given by
\[\Delta(v_{ij}) = \sum_{k=1}^N v_{ik} \otimes v_{kj} \qquad (1 \le i,j \le N).\]
\item Let $c = \pm 1$ and assume that $F \bar F = c 1$.  The \textit{free orthogonal quantum group} $O_F^+$ (with parameter matrix $F$) is the compact quantum group given by the universal C$^\ast$-algebra 
\begin{align} \label{eqn:defining} C(O^+_F) = C^*\big(\{u_{ij}\}_{1 \le i,j \le N} \ | \ U = [u_{ij}] \text{ is unitary } \& \ U = F \bar U F^{-1}\big),\end{align} together with coproduct $\Delta:C(O^+_F) \to C(O^+_F) \otimes C(O_F^+)$ given by
\[\Delta(u_{ij}) = \sum_{k=1}^N u_{ik} \otimes u_{kj} \qquad (1 \le i,j \le N).\]
\end{enumerate}
\end{defn}

\begin{rem}
The coproduct $\Delta$ is defined so that the matrix of generators $U = [u_{ij}]$ is always a fundamental representation of the compact matrix quantum groups $U^+_F$ and $O_F^+$, respectively.  
\end{rem}
\begin{rem}
Note that the above definition for $O^+_F$ makes sense for any $F\in \text{GL}_N(\C)$.  The additional condition $F \bar F = \pm 1$ is equivalent to the requirement that $U$ is always an irreducible representation of $O^+_F$.  Indeed, Banica \cite{Ba0} showed that $U$ is irreducible if and only if $F\bar F  = \pm \lambda 1$  ($\lambda > 0$), and moreover we clearly have $O_F^+ = O_{\lambda^{-1/2}F}^+$. 

We remark that for our asymptotic freeness results, our assumption that $F\bar F = \pm I$ is not a major restriction.  Indeed, by a result of Wang \cite[Section 6]{Wa02}, $O^+_F$ for generic $F \in \text{GL}_N(\C)$ can be decomposed into a free product of finitely many quantum groups $O^+_{F_i}$ and $U^+_{P_k}$ with $F_i, P_k$ invertible matrices and $F_i \bar{F_i} = \pm 1$.   
\end{rem}

For the remainder of the paper we will deal mostly with the free orthogonal quantum groups $O^+_F$.  Later on in Section \ref{sect:unitary} we indicate how to extend some of our orthogonal results to the unitary case.

\subsection{Canonical $F$-matrices for $O^+_F$} \label{section:canonicalF}

Let $c \in \{\pm 1\}$ and let $F \in \text{GL}_N(\C)$ be such that $F \bar F = c 1$.    In \cite{BiDeVa}, it is shown that if $c=1$, then there is an integer $0 \le k \le N/2$, a non-decreasing sequence $\rho = (\rho_i)_{i=1}^k \in (0,1)^k$, and a unitary $w \in U_N$ such that 
\begin{align} \label{eqn:canonicalF_c=1}
F_\rho^{(+1)}:=w^tFw = \left(\begin{matrix}0 & D_k(\rho) &0 \\
D_k(\rho)^{-1} &0 &0 \\
0 & 0 & 1_{N-2k}\end{matrix}\right),
\end{align}
where $D_k(\rho)$ denotes the $k \times k$ diagonal matrix with diagonal entries given by the $k$-tuple $\rho$.

On the other hand if $c= -1$, then by \cite{BiDeVa} we must have $N = 2k$ is even and there exists a non-decreasing sequence $\rho = (\rho_i)_{i=1}^k \in (0,1]^k$ and a unitary $w \in U_N$ such that 
\begin{align} \label{eqn:canonicalF_c=-1}
F_\rho^{(-1)}:=w^tFw = \left(\begin{matrix}0 & D_k(\rho)  \\
-D_k(\rho)^{-1} &0 \end{matrix}\right).
\end{align}
 
\begin{rem}
Note that the Kac type quantum groups $O_N^+$ correspond to the case $F = 1_N$, which is the canonical deformation matrix $F_\rho^{(+1)}$ with $k = 0$.
\end{rem}

According to \cite{BiDeVa}, given two matrices $F_i \in GL_{N_i}(\C)$ such that $F_i\bar F_i = c_i1$, the two free orthogonal quantum groups $O^+_{F_1}$ and $O^+_{F_2}$ are isomorphic if and only if $N_1 = N_2$, $c_1 = c_2$, and $F_2 = vF_1v^t$ for some unitary matrix $v \in U_{N_1}$.  The corresponding equivalence relation on such matrices has fundamental domain given by all matrices of the form $F^{(\pm 1)}_\rho$.  As a consequence, we call such matrices $F^{(\pm 1)}_\rho$ \textit{canonical} $F$-matrices.  The canonical $F$-matrices yield the most natural coordinate system in which to represent the isomorphism equivalence class of any given $O^+_F$.  
\subsection{Integration over $O^+_F$} \label{section:Weingarten}

In this section, we consider the problem of evaluating arbitrary monomials in the generators $\{u_{ij}\}_{1 \le i,j \le N}$ of $C(O^+_F)$ with respect to the Haar state $h_{O^+_F}$.

\begin{notat}
Fix an orthonormal basis $\{e_i\}_{i=1}^N$ for $\C^N$ and $F \in GL_N(\C)$.  Define \[\xi = \sum_{i=1}^N e_i \otimes e_i \quad \text{and} \quad \xi^F = (\id \otimes F)\xi = \sum_{i=1}^N e_i \otimes Fe_i.
\]  
For each $l \in \N$, $\pi \in NC_{2}(2l)$, and $i:[2l] \to [N]$ define \[\delta_\pi^F(i) = \prod_{(s,t) \in \pi} F_{i(t)i(s)},\] and put \[\xi^F_\pi = \sum_{i:[2l]\to [N]} \delta_\pi^F(i) e_{i(1)} \otimes e_{i(2)} \otimes \ldots \otimes e_{i(2l)} \in (\C^N)^{\otimes 2l} .\]  
\end{notat}
For the  purposes of integrating monomials over $O^+_F$ with respect to the Haar state, we are interested in the \textit{$l$-th tensor power} of the fundamental representation $U = [u_{ij}]$ of $O^+_F$, \[U^{\otop l}:= [u_{i(1)j(1)} \ldots u_{i(l)j(l)}] \in \mc B((\C^N)^{\otimes l}) \otimes C(O^+_F).\]  $U^{\otop l}$ is evidently a representation of the quantum group $O^+_F$, and the following theorem of Banica describes the space of fixed vectors of these higher tensor powers of $U$. 
\begin{thm}[\cite{Ba0}] \label{thm:basisofFix}
Let $N \ge 2$, $c \in \{\pm 1\}$ and $F \in GL_N(\C)$ be such that $F\bar F = c1$.  Then for each $l \in \N$, 
\[Fix(U^{\otop 2l+1}) = \{0\},\]  
and \[Fix(U^{\otop 2l}) = span\{\xi_\pi^F: \pi \in NC_2(2l)\}.\]  Moreover, $\{\xi_\pi^F\}_\pi$ is  a linear basis for $Fix(U^{\otop 2l})$. 
\end{thm}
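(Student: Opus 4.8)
The plan is to establish the statement by combining the general structure theory of $O^+_F$ with the tensor-category approach to fixed-point spaces. First I would recall that, by Woronowicz's Tannaka--Krein reconstruction (or directly from the defining relations), the collection of spaces $\{\text{Fix}(U^{\otop l})\}_{l \ge 0}$, together with the embeddings induced by the intertwiner $r_F \from \C \to \C^N \otimes \C^N$ coming from the relation $U = F\bar U F^{-1}$, forms a rigid tensor $C^\ast$-category. Concretely, the condition $U = F\bar UF^{-1}$ is equivalent to saying that the vector $\xi^F = (\id \otimes F)\xi \in \C^N \otimes \C^N$ is a fixed vector for $U^{\otop 2}$ (here one uses that $U$ is a unitary representation, so its conjugate $\bar U$ is equivalent to its contragredient, and the intertwiner implementing this equivalence is built from $F$). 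The key categorical fact, essentially due to Banica, is that this category is generated by $\xi^F$ together with the duality morphisms, so that every intertwiner — in particular every fixed vector of $U^{\otop 2l}$ — is obtained by composing, tensoring, and taking adjoints of copies of $\xi^F$, $\xi^{F}{}^\ast$, and identity maps. Diagrammatically, these compositions are indexed precisely by non-crossing pair partitions (this is the Temperley--Lieb structure), and tracing through the diagrammatic calculus one finds that the planar diagram $\pi \in NC_2(2l)$ is sent to exactly the vector $\xi^F_\pi$ as defined in the Notation above. This shows the spanning statement $\text{Fix}(U^{\otop 2l}) = \operatorname{span}\{\xi^F_\pi : \pi \in NC_2(2l)\}$, and also $\text{Fix}(U^{\otop 2l+1}) = \{0\}$, since there are no pairings of an odd set and the odd tensor powers carry no invariant vectors (equivalently, the trivial representation does not appear in $U^{\otop 2l+1}$, which follows by a parity/grading argument on the corepresentation ring, or from $\dim\text{Fix}(U^{\otop 2l+1}) = |NC_2(2l+1)| = 0$ via the categorical identification).

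Next I would prove linear independence of $\{\xi^F_\pi : \pi \in NC_2(2l)\}$, which upgrades "spanning set" to "basis." The cleanest route is dimension counting: the free orthogonal quantum group $O^+_F$ has fusion rules identical to $SU(2)$ (Banica \cite{Ba0}), so the multiplicity of the trivial corepresentation inside $U^{\otop 2l}$ equals the $l$-th Catalan number $C_l = |NC_2(2l)|$. Since we have already exhibited a spanning family of cardinality at most $|NC_2(2l)| = C_l = \dim\text{Fix}(U^{\otop 2l})$, the family must in fact be a basis; in particular the vectors $\xi^F_\pi$ are distinct and linearly independent. Alternatively — and this is the more self-contained argument I would actually write out — one can check linear independence directly on the level of the vectors in $(\C^N)^{\otimes 2l}$ when $N \ge 2$: the Gram matrix $G_{\pi\sigma} = \langle \xi^F_\pi \mid \xi^F_\sigma\rangle$ has a loop-counting form $\prod_{\text{loops of } \pi\vee\sigma}(\text{factor depending on } F\bar F)$, and for $F\bar F = \pm 1$ this reduces to the classical Temperley--Lieb Gram matrix $G_{\pi\sigma} = (\pm N)^{\#\text{loops}}$ up to signs, which is well known to be nonsingular for $|N| \ge 2$. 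I would note that the loop factor works out cleanly precisely because $F\bar F = c1$ — this is where that hypothesis is used — so that closing up a strand contributes $\operatorname{Tr}((F^\ast F)^{\pm 1})$-type scalars only through the value $c N = \pm N$ governing full loops.

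The main obstacle, and the step deserving the most care, is the verification that the concrete vector $\xi^F_\pi$ is genuinely the image of the planar diagram $\pi$ under the diagrammatic functor — i.e.\ bookkeeping the index contractions. One must check that when the partition $\pi$ is drawn as a planar diagram connecting $2l$ boundary points and decomposed into elementary cups and caps, the resulting tensor contraction of copies of $F$ (for caps) and $F^{-1}$ (for cups) collapses, using $F\bar F = c1$ for any nested/closed configuration, to exactly $\sum_i \delta^F_\pi(i)\, e_{i(1)} \otimes \cdots \otimes e_{i(2l)}$ with $\delta^F_\pi(i) = \prod_{(s,t)\in\pi} F_{i(t)i(s)}$. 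This is a routine but genuinely fiddly induction on the number of blocks (peeling off an innermost pair $(s,s+1)$ of $\pi$), and the orientation conventions for which index of $F$ sits on the "upper" versus "lower" strand must be tracked consistently. Once that identification is pinned down, both the spanning statement and (via the Gram matrix computed in the same index coordinates) the linear independence follow, completing the proof. I would also remark that the $c=-1$ case introduces signs into $\delta^F_\pi(i)$ but does not affect the combinatorial count, so the basis statement is uniform in $c$.
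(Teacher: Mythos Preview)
The paper does not actually prove this theorem: it is stated with attribution to Banica \cite{Ba0} and no proof is given. So there is no ``paper's own proof'' to compare against; the paper simply imports the result and then, in the very next theorem (Theorem \ref{thm:grammatrix}), computes the Gram matrix of the $\xi^F_\pi$ and deduces its invertibility \emph{from} the basis statement rather than the other way around.

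Your outline is a reasonable sketch of the Banica argument (Temperley--Lieb category generated by the duality morphism $\xi^F$, together with $SU(2)$ fusion rules for the dimension count). One correction worth flagging: in your alternative linear-independence argument via the Gram matrix you write $G_{\pi\sigma} = (\pm N)^{\#\text{loops}}$, but the correct parameter is the quantum dimension $N_F = \Tr(F^*F)$, not $N$. Indeed the paper's Theorem \ref{thm:grammatrix} gives $G_{2l,F}(\pi,\sigma) = c^{\,l+|\pi\vee\sigma|}N_F^{|\pi\vee\sigma|}$, i.e.\ the Temperley--Lieb Gram matrix at loop parameter $cN_F$ (up to the global sign $c^l$). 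Since $N_F \ge N \ge 2$ this Gram matrix is still nonsingular, so your argument survives, but the distinction between $N$ and $N_F$ is exactly the point of the paper and should not be blurred here. Note also that your proposed logical order (compute Gram matrix $\Rightarrow$ linear independence) is the reverse of the paper's (basis from \cite{Ba0} $\Rightarrow$ Gram matrix invertible); either direction is fine, but if you go your route you must prove the Gram-matrix formula independently of the basis statement, which is what the inductive computation in the proof of Theorem \ref{thm:grammatrix} does.
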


With the preceding theorem at hand, we now use the Weingarten calculus to describe the Haar state on $O^+_F$ in terms of the Gram matrices associated to the bases $\{\xi^F_\pi\}_{\pi \in NC_2(2l)}$ of $\text{Fix}(U^{\otop 2l})$. For each $l \in \N$, define an $|NC_{2}(2l)| \times |NC_{2}(2l)|$ matrix $G_{2l,F} = [G_{2l,F}(\pi,\sigma)]_{\pi,\sigma \in NC_2(2l)}$ by \[G_{2l,F}(\pi,\sigma) = \langle \xi^F_\pi|\xi^F_\sigma \rangle \qquad (\pi,\sigma \in NC_2(2l)).\]

\begin{thm} \label{thm:grammatrix}
Let $N \ge 2$, $c \in \{\pm 1\}$ and $F \in GL_N(\C)$ be such that $F\bar F = c1$.  Set  $N_F:=Tr(F^*F)$.   Then for any $l \ge 1$, $G_{2l,F}$ is an invertible matrix and 
\[
G_{2l,F}(\pi,\sigma) = c^{l+|\pi \vee \sigma |}N_F^{|\pi \vee \sigma|} \qquad (\pi,\sigma \in NC_2(2l)), 
\] 
where $\pi\vee\sigma$ denotes the join of $\pi$ and $\sigma$ in the lattice $\mc P(2l)$.
\end{thm}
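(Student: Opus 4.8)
The plan is to compute $\langle \xi_\pi^F \mid \xi_\sigma^F \rangle$ directly from the definition and reduce the sum over index functions $i:[2l]\to[N]$ to a constraint governed by the join $\pi\vee\sigma$. First I would expand
\[
\langle \xi_\pi^F \mid \xi_\sigma^F \rangle = \sum_{i,j:[2l]\to[N]} \overline{\delta_\pi^F(i)}\,\delta_\sigma^F(j)\,\langle e_{i(1)}\otimes\cdots\otimes e_{i(2l)} \mid e_{j(1)}\otimes\cdots\otimes e_{j(2l)}\rangle,
\]
and use orthonormality of the $e_i$ to collapse this to the diagonal $i=j$, giving $\sum_{i:[2l]\to[N]} \overline{\delta_\pi^F(i)}\,\delta_\sigma^F(i)$. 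Now $\overline{\delta_\pi^F(i)} = \prod_{(s,t)\in\pi}\overline{F_{i(t)i(s)}} = \prod_{(s,t)\in\pi}(F^*)_{i(s)i(t)}$, so the product $\overline{\delta_\pi^F(i)}\delta_\sigma^F(i)$ is a product over all $4l$ legs of entries of $F$ and $F^*$, with the index $i$ coupling legs exactly along the blocks of $\pi$ and of $\sigma$. The key combinatorial observation is that, viewing $\pi\cup\sigma$ as a graph on the vertex set $[2l]$ whose connected components are precisely the blocks of the join $\pi\vee\sigma$, the sum over $i$ factorizes over these connected components: on each component one is left with a trace-like contraction of a string of alternating $F$'s and $F^*$'s around a cycle.

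The next step is to identify what each connected component contributes. Since $\pi$ and $\sigma$ are pairings, every vertex has degree $1$ in $\pi$ and degree $1$ in $\sigma$, hence degree $2$ in $\pi\cup\sigma$; therefore each connected component is a single even cycle that alternates $\pi$-edges and $\sigma$-edges. Walking around such a cycle and reading off the matrix entries, the contribution is $\mathrm{Tr}\big((F^* F^{\mathrm{something}})\cdots\big)$; using $\bar F F = \overline{F\bar F}^{\,t}$-type relations, or more simply the hypothesis $F\bar F = c1$ (equivalently $\bar F = cF^{-1}$, so $F^* = \overline{F^t} = c (F^{-1})^t = c(F^t)^{-1}$), each such cyclic contraction simplifies. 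I expect that after using $F\bar F = c1$ to rewrite every occurrence of $F^*$ in terms of $F$ and $F^{-1}$, each cycle of length $2m$ telescopes: the $F$'s and $F^{-1}$'s cancel in pairs down to a single factor, leaving $c^{m}\,\mathrm{Tr}(1) = c^m N$ per component, or more precisely a total power of $c$ that accumulates to $c^{l}$ across all $2l$ legs times $c^{|\pi\vee\sigma|}$ from the $|\pi\vee\sigma|$ independent cyclic closures, and exactly $N_F = \mathrm{Tr}(F^*F)$ — not $N$ — appearing once per component. The care here is that the very shortest cycles (a block $\{s,t\}$ that is a block of both $\pi$ and $\sigma$) already give $\sum_{a,b}(F^*)_{ab}F_{ab} = \mathrm{Tr}(F^*F) = N_F$, which pins down that the per-component factor is uniformly $N_F$, and then an induction on cycle length (splicing a double edge) shows longer cycles collapse to the same $N_F$ with an appropriate power of $c$. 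Combining: $G_{2l,F}(\pi,\sigma) = c^{l+|\pi\vee\sigma|}N_F^{|\pi\vee\sigma|}$.

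Finally, invertibility of $G_{2l,F}$ is immediate from Theorem \ref{thm:basisofFix}: the vectors $\{\xi_\pi^F\}_{\pi\in NC_2(2l)}$ were asserted there to form a \emph{linear basis} of $\mathrm{Fix}(U^{\otop 2l})$, so their Gram matrix with respect to the inner product on $(\C^N)^{\otimes 2l}$ is automatically positive definite, hence invertible; no separate argument is needed.

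The main obstacle I anticipate is the bookkeeping in the middle step — correctly tracking, around each alternating cycle, which legs carry $F$ versus $F^*$ and in which index order, so that the reduction via $F^* = c(F^t)^{-1}$ genuinely telescopes to a single $N_F$ and a clean power of $c$, rather than to some unwanted product like $\mathrm{Tr}((F^*F)^m)$. Getting the orientation conventions in the definition of $\delta_\pi^F(i)$ (where $(s,t)\in\pi$ means $s<t$) to mesh with the alternation is the delicate point; the safest route is to first verify the formula on the two extreme cases $\pi=\sigma$ (where $\pi\vee\sigma=\pi$ has $l$ blocks and one computes $\langle\xi_\pi^F\mid\xi_\pi^F\rangle$ directly) and $\pi\vee\sigma=\hat 1$ (a single $2l$-cycle), and then argue the general case by the splicing induction.
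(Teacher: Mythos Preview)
Your overall architecture matches the paper's: expand the Gram entry as $\sum_i \overline{\delta_\pi^F(i)}\delta_\sigma^F(i)$, factor over the blocks of $\pi\vee\sigma$, reduce to the one-block case, and induct. Invertibility via Theorem~\ref{thm:basisofFix} is exactly how the paper handles it too.

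The gap is in your inductive step. Your ``walk around the cycle and telescope'' heuristic does not use the non-crossing hypothesis anywhere, and without it the claim is simply false: for the crossing pair partition $\pi=\{(1,3),(2,4)\}$ and $\sigma=\{(1,2),(3,4)\}$ one computes
\[
\sum_i \overline{\delta_\pi^F(i)}\,\delta_\sigma^F(i)=\sum_{b,c}(FF^*)_{bc}(F^*F)_{bc}=\Tr\big(FF^*F^t\bar F\big)=\Tr(1)=N,
\]
not $cN_F$. So an argument that only sees the abstract alternating cycle cannot distinguish these cases; the orientation bookkeeping you flag as ``the main obstacle'' is not a mere nuisance but is where non-crossingness must enter. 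Relatedly, your phrase ``splicing a double edge'' is not well-defined here: when $\pi\vee\sigma=1_{2l}$ with $l\ge 2$ there are no common blocks, hence no double edges to splice.

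The paper's resolution is to exploit non-crossingness of $\pi$ to locate an \emph{interval} block $\{r,r+1\}\in\pi$, and then to contract it using the conjugate-equation identity
\[
c\,\xi^F=(\iota\otimes(\xi^F)^*\otimes\iota)(\xi^F\otimes\xi^F),
\]
which encodes $F\bar F=c1$ diagrammatically. This yields $\langle\xi_\pi^F\mid\xi_\sigma^F\rangle=c\,\langle\xi_{\pi'}^F\mid\xi_{\sigma'}^F\rangle$ for suitable $\pi',\sigma'\in NC_2(2l-2)$ with $\pi'\vee\sigma'=1_{2l-2}$, and the induction closes cleanly with no index chasing. If you want to salvage your direct approach, the missing ingredient is precisely this: use non-crossingness to guarantee an interval in $\pi$, and replace ``splicing a double edge'' by ``contracting that interval against the two $\sigma$-edges meeting it'', which is exactly what the identity above accomplishes.
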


\begin{proof}
The first assertion follows from Theorem \ref{thm:basisofFix}.  For the second assertion, fix $\pi,\sigma \in NC_2(2l)$ and let $\pi \vee \sigma = \{\mc V_1, \mc V_2, \ldots, \mc V_{m}\}$, where each $\mc V_a$ is a block of $\pi\vee \sigma$.  Then we have 

\[G_{2l,F}(\pi,\sigma) = \sum_{i:[2l] \to [N]} \overline{\delta_\pi^F(i)} \delta_\sigma^F(i) =  \prod_{a=1}^m \Big(\sum_{i:\mc V_a \to [N]} \overline{\delta_{\pi|_{\mc V_a}}^F(i)}\delta_{\sigma|_{\mc V_a}}^F(i)\Big). \]  From the above equation we see that $G_{2l,F}(\pi,\sigma)$ is a multiplicative function of the blocks of $\pi \vee \sigma$, and therefore it suffices to prove the theorem when $\pi \vee \sigma = 1_{2l}$.  

To prove this special case of the theorem, we proceed by induction on $l$:  If $2l=2$, then $G_{2l,F} = \langle \xi^F|\xi^F \rangle  = Tr(F^*F) = N_F = c^2N_F$.  Now assume $l \ge 2$ and that the desired result is true for all $\pi', \sigma' \in NC_2(2l-2)$ with $\pi' \vee \sigma'=1_{2l-2}$.   Fix $\pi,\sigma \in NC_2(2l)$ such that $\pi \vee \sigma = 1_{2l}$.
Since $\pi$ is non-crossing, we can fix an interval $\{r,r+1\}$ in $\pi$ and let $\{a, r\}$, $\{b,r+1\}$ be the corresponding (unordered) pairs of $\sigma$ that connect to $r$ and $r+1$.  (Note that $\sigma$ does not pair $r$ and $r+1$ because $|\pi \vee \sigma| = 1$ and $l \ge 2$.)  Now let $\pi' \in NC_2(2l-2)$ be the pairing obtained by deleting the block $\{r,r+1\}$ from $\pi$ and let $\sigma' \in NC_2(2l-2)$ be the pairing obtained by deleting the points $r,r+1$ from $\sigma$ and pairing $a$ and $b$.  Note that by construction, $\pi' \vee \sigma' = 1_{2l-2}$.
       
Using the readily verified identities 
\[c\xi^F = (\iota \otimes (\xi^F)^* \otimes \iota)(\xi^F \otimes \xi^F) = ((\xi^F)^* \otimes \iota) (\iota \otimes \xi^F \otimes \iota)\xi^F = (\iota \otimes (\xi^F)^*) (\iota \otimes \xi^F \otimes \iota)\xi^F ,\]
it easily follows that $G_{2l,F}(\pi,\sigma) =  \langle \xi^F_\pi|\xi^F_\sigma \rangle = c\langle \xi^F_{\pi'}|\xi^F_{\sigma'} \rangle$.  We then have from our induction assumption that 
\[G_{2l,F}(\pi,\sigma) = c (c^{l-1+|\pi' \vee \sigma'|}N_F) = c^{l+1}N_F. \]
\end{proof}

For each $l \in \N$, denote by $W_{2l,F}$ the matrix inverse of $G_{2l,F}$.  In the following theorem we give a Weingarten-type moment formula for the Haar state on $O^+_F$ (compare with \cite{BaCo, BaCoZJ}).  

\begin{thm} \label{thm:moments}
For each pair of multi-indices $i,j:[l] \to [N]$, we have 
\[h_{O^+_F}(u_{i(1)j(1)}u_{i(2)j(2)}\ldots u_{i(l)j(l)}) = 0\] if $l$ is odd, and otherwise 
\[h_{O^+_F}(u_{i(1)j(1)}u_{i(2)j(2)}\ldots u_{i(l)j(l)}) = \sum_{\pi,\sigma \in NC_2(l)} W_{l,F}(\pi,\sigma) \overline{\delta^F_\pi(j)} \delta^F_\sigma(i). \]
\end{thm}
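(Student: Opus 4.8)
The plan is to combine the standard averaging property of the Haar state with Banica's description of the fixed-vector spaces (Theorem~\ref{thm:basisofFix}) and the Gram matrix computation of Theorem~\ref{thm:grammatrix}. The starting point is the well-known fact that, for any finite-dimensional unitary representation $V \in \mc B(H)\otimes C(\G)$ of a compact quantum group, the operator $(\id \otimes h_\G)(V) \in \mc B(H)$ is exactly the orthogonal projection of $H$ onto $\text{Fix}(V)$; this is a standard consequence of the invariance property \eqref{Haar} of the Haar state together with the representation identity \eqref{rep} (see e.g.\ \cite{Ti, Wo2}). Applying this to the unitary representation $V = U^{\otop l}$ and reading off matrix entries in the orthonormal basis $\{e_{i(1)}\otimes\cdots\otimes e_{i(l)}\}_{i:[l]\to[N]}$ of $(\C^N)^{\otimes l}$, I obtain
\[
h_{O^+_F}(u_{i(1)j(1)}\cdots u_{i(l)j(l)}) = \langle e_{i(1)}\otimes\cdots\otimes e_{i(l)} \,|\, P_l(e_{j(1)}\otimes\cdots\otimes e_{j(l)}) \rangle,
\]
where $P_l$ is the orthogonal projection of $(\C^N)^{\otimes l}$ onto $\text{Fix}(U^{\otop l})$. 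When $l$ is odd, Theorem~\ref{thm:basisofFix} gives $\text{Fix}(U^{\otop l}) = \{0\}$, hence $P_l = 0$ and the moment vanishes; this is the first assertion.

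For $l = 2m$ even, the next step is to express $P_{2m}$ in terms of the (in general non-orthonormal) linear basis $\{\xi^F_\pi : \pi \in NC_2(2m)\}$ of $\text{Fix}(U^{\otop 2m})$ supplied by Theorem~\ref{thm:basisofFix}. Here I would invoke the routine linear-algebra fact that the orthogonal projection onto a subspace spanned by a basis $\{\xi_\pi\}$ with invertible Gram matrix $G = [\langle \xi_\pi \,|\, \xi_\sigma\rangle]$ equals $\sum_{\pi,\sigma} (G^{-1})(\pi,\sigma)\, |\xi_\pi\rangle\langle\xi_\sigma|$ (one checks directly that this operator fixes each $\xi_\tau$ and kills the orthogonal complement). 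Since $G_{2m,F}$ is invertible by Theorem~\ref{thm:grammatrix}, this gives $P_{2m} = \sum_{\pi,\sigma} W_{2m,F}(\pi,\sigma)\, |\xi^F_\pi\rangle\langle\xi^F_\sigma|$.

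Substituting this into the matrix-entry formula and expanding via the definition $\xi^F_\pi = \sum_{k:[2m]\to[N]} \delta^F_\pi(k)\, e_{k(1)}\otimes\cdots\otimes e_{k(2m)}$, both inner products collapse to a single term: $\langle e_{i(1)}\otimes\cdots\otimes e_{i(2m)} \,|\, \xi^F_\pi\rangle = \delta^F_\pi(i)$ and $\langle \xi^F_\sigma \,|\, e_{j(1)}\otimes\cdots\otimes e_{j(2m)}\rangle = \overline{\delta^F_\sigma(j)}$, the complex conjugate appearing because $\xi^F_\sigma$ occupies the (conjugate-linear) first slot of the inner product. This yields $h_{O^+_F}(u_{i(1)j(1)}\cdots u_{i(2m)j(2m)}) = \sum_{\pi,\sigma} W_{2m,F}(\pi,\sigma)\, \delta^F_\pi(i)\, \overline{\delta^F_\sigma(j)}$. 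Finally, since the Gram entries $G_{2m,F}(\pi,\sigma) = c^{l+|\pi\vee\sigma|} N_F^{|\pi\vee\sigma|}$ of Theorem~\ref{thm:grammatrix} are real, the matrix $W_{2m,F}$ is real and symmetric, so interchanging the summation labels $\pi \leftrightarrow \sigma$ turns the last expression into the stated formula $\sum_{\pi,\sigma} W_{l,F}(\pi,\sigma)\, \overline{\delta^F_\pi(j)}\, \delta^F_\sigma(i)$.

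The argument is essentially bookkeeping once the three inputs — the Haar-state projection formula, Theorem~\ref{thm:basisofFix}, and the inversion of the Gram matrix — are granted, so I do not expect a genuine obstacle. The one point that demands care is the orientation conventions (which of the two multi-indices $i, j$ carries the complex conjugate, and the row-versus-column placement of indices in $U^{\otop l}$), since that is exactly where a conjugation or transposition error would most easily creep in; I would pin these down by verifying the base case $l = 2$, where $G_{2,F}$ is the $1 \times 1$ matrix $N_F$ and the formula reduces to $h_{O^+_F}(u_{i(1)j(1)}u_{i(2)j(2)}) = N_F^{-1}\, \overline{F_{j(2)j(1)}}\, F_{i(2)i(1)}$.
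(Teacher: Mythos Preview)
Your proof is correct and follows essentially the same approach as the paper: both identify the moment as the $(i,j)$-matrix entry of the orthogonal projection $P_{U^{\otop l}}$ onto $\text{Fix}(U^{\otop l})$, invoke Theorem~\ref{thm:basisofFix} for the odd case, and then expand $P_{U^{\otop l}}$ in terms of the $\xi^F_\pi$ for even $l$. The only cosmetic difference is that the paper reaches the formula $P_{U^{\otop l}} = \sum_{\pi,\sigma} W_{l,F}(\pi,\sigma)\,|\xi^F_\pi\rangle\langle\xi^F_\sigma|$ by first orthonormalizing via the square root $W_{l,F}^{1/2}$ and then recombining, whereas you quote the Gram-inverse projection formula directly; your observation that $G_{l,F}$ (hence $W_{l,F}$) is real symmetric is exactly what makes the paper's square-root manipulation legitimate and what lets you swap $\pi\leftrightarrow\sigma$ at the end.
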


\begin{proof}
We use the fact that if $V \in \mc B(H) \otimes C(\G)$ is a unitary representation of a compact quantum group $\G$ with Haar state $h$, then $P_V = (\id \otimes h)(V)$ is the orthogonal projection onto $\text{Fix}(V)$.  Using this fact, the quantity we are interested in is the $(i,j)$-th  matrix element of the projection $P_{U^{\otop l}}$.  Since $P_{U^{\otop l}} = 0$ when $l$ is odd (by Theorem \ref{thm:basisofFix}), the first equality is immediate.

For the second equality, assume $l \ge 2$ is even.  Let $\{\xi^F_\pi\}_{\pi \in NC_2(l)}$ be the basis for $\text{Fix}(U^{\otop l})$ from Theorem \ref{thm:basisofFix} and define a new family  $\{\tilde \xi^F_\pi\}_{\pi \in NC_2(l)} \subset \text{Fix}(U^{\otop l})$ by \[\tilde \xi^F_\pi = \sum_{\sigma \in NC_2(l)} W^{1/2}_{l,F}(\pi,\sigma) \xi^F_\sigma,\] where $W^{1/2}_{l,F}$ 
is the matrix square root of $W_{l,F}$. Then $\{\tilde \xi^F_\pi\}_{\pi \in NC_2(l)}$ is an orthonormal basis for $\text{Fix}(U^{\otop l})$ by Theorem \ref{thm:grammatrix} and $P_{U^{\otop l}} = \sum_{\pi \in NC_2(l)} |\tilde\xi^F_\pi \rangle\langle \tilde \xi^F_\pi|$.  
Therefore 
\begin{align*}
h_{O^+_F}(u_{i(1)j(1)}u_{i(2)j(2)}\ldots u_{i(l)j(l)}) &= \langle e_i| P_{U^{\otop l}}e_j\rangle \\
& =  \sum_{\rho \in NC_2(l)} \langle \tilde\xi^F_\rho|e_j\rangle \langle e_i| \tilde \xi^F_\rho \rangle \\
& =  \sum_{\pi,\sigma, \rho \in NC_2(l)} W^{1/2}_{l,F}(\rho,\pi)\langle \xi^F_\pi|e_j\rangle W^{1/2}_{l,F}(\rho, \sigma) \langle e_i| \xi^F_\sigma\rangle \\
&= \sum_{\pi,\sigma \in NC_2(l)} W_{l,F}(\pi,\sigma) \langle \xi^F_\pi|e_j\rangle \langle e_i| \xi^F_\sigma\rangle\\
&=  \sum_{\pi,\sigma \in NC_2(l)} W_{l,F}(\pi,\sigma) \overline{\delta^F_\pi(j)} \delta^F_\sigma(i).
\end{align*}   
\end{proof}

\subsection{Integrating $\ast$-monomials over $O_F^+$} \label{section:integrate}  Note that the defining relations for the generators $\{u_{ij}\}_{1 \le i,j \le N}$ of $(C(O_F^+),h_{O_F^+})$ imply that the family $\{u_{ij}\}_{1 \le i,j \le N}$ is self-adjoint.  Moreover, taking the deformation matrix $F$ to be in canonical form, as defined in Section \ref{section:canonicalF}, we can write
\[F = F_\rho^{(c)}= \left(\begin{matrix}0 & D_k(\rho) &0 \\
cD_k(\rho)^{-1} &0 &0 \\
0 & 0 & 1_{N-2k}\end{matrix}\right). \] 
where $D_k(\rho)$ denotes the $k \times k$ diagonal matrix with diagonal entries given by a $k$-tuple $\rho = (\rho_i)_{i=1}^k \subset (0,1]^k$, and $N = 2k$ if $c=-1$.  Then $F^{-1} = cF$, and we easily compute from  \eqref{eqn:defining} that \begin{align}\label{eqn:adjoint}
u_{ij}^* = (cFUF)_{ij} =c \sum_{1 \le r,s \le N} F_{ir}F_{sj}u_{rs} = \left\{ \begin{matrix}
cF_{i,i+k}F_{j+k,j}u_{i+k,j+k} & 1 \le i,j \le 2k \\
F_{i,i+k}u_{i+k,j} & 1 \le i \le 2k, \ j > 2k \\
F_{j+k,j} u_{i,j+k} & 1 \le j \le 2k, \ i > 2k \\
u_{ij} & i,j > 2k  
\end{matrix}\right., \end{align} where in the above equations we perform the additions $i \mapsto i+k,j \mapsto j+k$ modulo $2k$.  Using these equations, it is easy to see that the fundamental representation $U = [u_{ij}]_{1 \le i,j \le N}$ of $O^+_F$ admits the following canonical block-matrix decomposition.
\begin{align}\label{eqn:U-decomp}
U =  \left(\begin{matrix}[u_{a,b}]_{1 \le a,b \le k} & [u_{a, b+k}]_{1 \le a,b \le k}  &[u_{a, t}]_{\substack{1 \le a \le k \\ t \ge 2k+1}} \\
[c \rho_a^{-1}\rho_b^{-1} u_{a,b+k}^*]_{1 \le a,b \le k} &[\rho_a^{-1}\rho_b u_{a,b}^*]_{1 \le a,b \le k}& [\rho_a^{-1}u_{a, t}^*]_{\substack{1 \le a \le k \\ t \ge 2k+1}} \\
[u_{s, b}]_{\substack{1 \le b \le k \\ s \ge 2k+1}} & [\rho_bu_{s, b}^*]_{\substack{1 \le b \le k \\ s \ge 2k+1}} & [u_{s,t}]_{s,t \ge 2k+1}\end{matrix}\right).
\end{align}

\begin{rem} \label{rem:subset-generators}
From equation \eqref{eqn:U-decomp}, we see that the C$^\ast$-algebra $C(O^+_F)$ is generated  by the subset \[\Big(\{u_{ij}\}_{\substack{1 \le i \le k \\ 1 \le j \le N}} \cup \{u_{ij}\}_{\substack{1 \le j \le k \\ 2k+1 \le i \le N}}\cup \{u_{ij}\}_{2k+1 \le i,j \le N}\Big) \subset \{u_{ij}\}_{1 \le i,j \le N}.\]
\end{rem}

\subsubsection{General $\ast$-moments over $O^+_F$}

Let $\epsilon \in \{1, \ast\}$ and $i, j \in [N]$.  Using \eqref{eqn:U-decomp} (or equivalently \eqref{eqn:adjoint}), we can find unique numbers $i_\epsilon, j_\epsilon \in [N]$ and $t_F(i,j,\epsilon) \in \R$ such that 
\begin{align} \label{eqn:t_F}
u_{ij}^{\epsilon} = t_F(i,j,\epsilon)u_{i_{\epsilon}j_{\epsilon}}.
\end{align}

In particular, the computation of arbitrary $\ast$-moments in the generators $\{u_{ij}\}_{1 \le i,j \le N}$ can be computed using the formula of Theorem \ref{thm:moments}. 

\begin{prop}\label{prop:starmoments}
Let $l \in \N$, $i,j:[l] \to [N]$ and $\epsilon:[l]\to \{1,\ast\}$ be given.  If $l$ is odd, then 
\[h_{O^+_F}\big(u_{i(1)j(1)}^{\epsilon(1)}u_{i(2)j(2)}^{\epsilon(2)}\ldots u_{i(l)j(l)}^{\epsilon(l)}\big) = 0.\]  If $l$ is even, then 
\begin{align*}h_{O^+_F}\big(u_{i(1)j(1)}^{\epsilon(1)}u_{i(2)j(2)}^{\epsilon(2)}\ldots u_{i(l)j(l)}^{\epsilon(l)}\big)&=\prod_{r=1}^lt_F(i(r),j(r), \epsilon(r)) h_{O^+_F}\Big(\prod_{r=1}^lu_{i(r)_{\epsilon(r)},j(r)_{\epsilon(r)}}\Big)\\
&=  \prod_{r=1}^lt_F(i(r),j(r), \epsilon(r)) \sum_{\pi,\sigma \in NC_2(l)} W_{l,F}(\pi,\sigma)\overline{\delta_{\pi}^F(j_\epsilon)}\delta_{\sigma}^F(i_\epsilon),
\end{align*}
where $i_\epsilon = (i_{\epsilon(1)}(1), i_{\epsilon(2)}(2), \ldots,  i_{\epsilon(l)}(l))$ and $j_\epsilon = (j_{\epsilon(1)}(1), j_{\epsilon(2)}(2), \ldots,  j_{\epsilon(l)}(l))$. 
\end{prop}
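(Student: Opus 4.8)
The plan is to reduce everything to Theorem~\ref{thm:moments} by peeling off the adjoints one generator at a time. The first step is to observe that the defining relation $u_{ij}^* = c\sum_{r,s}F_{ir}F_{sj}u_{rs}$, combined with the fact that $F = F^{(c)}_\rho$ is a canonical matrix (hence $F^{-1} = cF$ and each row and column of $F$ has exactly one nonzero entry when restricted to the appropriate index ranges), shows that each $u_{ij}^*$ is a \emph{scalar multiple of a single generator}: this is precisely equation \eqref{eqn:t_F}, $u_{ij}^\epsilon = t_F(i,j,\epsilon)u_{i_\epsilon j_\epsilon}$, where $t_F(i,j,1) = 1$, $i_1 = i$, $j_1 = j$, and for $\epsilon = \ast$ the values of $t_F$, $i_\ast$, $j_\ast$ are read off from \eqref{eqn:adjoint} (case-by-case according to whether $i,j$ lie in $[2k]$ or not). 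The key point, which I would state explicitly, is that $t_F(i,j,\epsilon)$ is a \emph{real number} and that $i_\epsilon, j_\epsilon$ are uniquely determined, so the substitution is unambiguous.

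Next, since the Haar state $h_{O^+_F}$ is linear and the scalars $t_F(i(r),j(r),\epsilon(r))$ are just numbers, we may pull all of them out of the state at once:
\begin{align*}
h_{O^+_F}\big(u_{i(1)j(1)}^{\epsilon(1)}\cdots u_{i(l)j(l)}^{\epsilon(l)}\big)
&= h_{O^+_F}\Big(\prod_{r=1}^l t_F(i(r),j(r),\epsilon(r))\, u_{i(r)_{\epsilon(r)} j(r)_{\epsilon(r)}}\Big)\\
&= \Big(\prod_{r=1}^l t_F(i(r),j(r),\epsilon(r))\Big)\, h_{O^+_F}\Big(\prod_{r=1}^l u_{i(r)_{\epsilon(r)} j(r)_{\epsilon(r)}}\Big).
\end{align*}
This already gives the first displayed equality in the even case, and the odd case follows immediately because the word $\prod_{r=1}^l u_{i(r)_{\epsilon(r)} j(r)_{\epsilon(r)}}$ has odd length $l$, so Theorem~\ref{thm:moments} (or Theorem~\ref{thm:basisofFix}, via $\mathrm{Fix}(U^{\otop l}) = \{0\}$ for odd $l$) makes it vanish.

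Finally, for the even case I would apply Theorem~\ref{thm:moments} directly to the non-adjointed word, with the multi-indices $i_\epsilon = (i(1)_{\epsilon(1)}, \ldots, i(l)_{\epsilon(l)})$ and $j_\epsilon = (j(1)_{\epsilon(1)}, \ldots, j(l)_{\epsilon(l)})$ in the roles of $i$ and $j$. That theorem gives
\[
h_{O^+_F}\Big(\prod_{r=1}^l u_{i(r)_{\epsilon(r)} j(r)_{\epsilon(r)}}\Big) = \sum_{\pi,\sigma \in NC_2(l)} W_{l,F}(\pi,\sigma)\,\overline{\delta^F_\pi(j_\epsilon)}\,\delta^F_\sigma(i_\epsilon),
\]
which is exactly the second displayed equality. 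I do not anticipate a genuine obstacle here; the only point requiring care is the bookkeeping in \eqref{eqn:adjoint} and \eqref{eqn:t_F}—in particular verifying that the indices wrap around modulo $2k$ correctly and that $t_F$ really is always real—and confirming that $h_{O^+_F}$ being merely a state (not necessarily tracial or faithful on all of $C(O^+_F)$) is no obstruction, since linearity is all that is used and Theorem~\ref{thm:moments} is already established in this generality.
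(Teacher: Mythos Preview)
Your proposal is correct and follows exactly the approach the paper intends: the paper simply says ``The proof of this result is immediate,'' and your argument---substitute via \eqref{eqn:t_F}, pull the real scalars $t_F(i(r),j(r),\epsilon(r))$ out by linearity, then apply Theorem~\ref{thm:moments} to the un-starred word of the same length---is precisely that immediate proof spelled out.
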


The proof of this result is immediate.

\subsubsection{Variances of the generators of $C(O^+_F)$}

The simplest (non-zero) $\ast$-moments are the left and right covariances of the generators $\{u_{ij}\}_{1 \le i,j \le N}\subset C(O^+_F)$, i.e., the quantities $\langle u_{ij} |u_{kl} \rangle_{L} = h_{O^+_F}(u_{ij}^*u_{kl})$ and $\langle u_{kl} |u_{ij} \rangle_{R}=h_{O^+_F}(u_{ij}u_{kl}^*)$.  The left and right covariances can be computed using Proposition \ref{prop:starmoments}.  Alternatively, we can compute these quantities using the Schur orthogonality relations
\begin{align} \label{eqn:Schur}
 h_{O^+_F}(u_{ij}^*u_{kl}) = \frac{\delta_{jl}(Q^{-1})_{ki}}{N_F}, \quad  h_{O^+_F}(u_{ij}u_{kl}^*) = \frac{\delta_{ik}Q_{lj}}{N_F} \qquad (1 \le i,j \le N),
\end{align}
where $Q = F^t \overline{F}$, $Q^{-1} = FF^*$ and  $\text{Tr}(Q) = \text{Tr}(Q^{-1}) = N_F$.  See for example \cite{Wo2} and the paragraphs following Theorem 7.2 of \cite{VaVe}.
In particular, when $F= F^{(c)}_\rho$ is a canonical $F$-matrix as above, then structure of $Q$ is simple: 
\[
Q = \left(\begin{matrix} D_k(\rho)^{-2}& 0 &0 \\
0 &D_k(\rho)^{2} &0 \\
0 & 0 & 1_{N-2k}\end{matrix}\right).\]
In particular, it follows from \eqref{eqn:Schur} that $\{u_{ij}\}_{1 \le i,j \le N}$ is an orthogonal system with respect to the inner products $\langle \cdot | \cdot \rangle_{L}$ and $\langle \cdot | \cdot \rangle_{R}$ induced by $h_{O^+_F}$, and a simple calculation shows that the $N \times N$ matrix of left and right variances $\Phi= \big[(\langle u_{ij} |u_{ij} \rangle_{L},\langle u_{ij} |u_{ij} \rangle_{R}) \big]_{1 \le i,j \le N}$ has the following block-matrix decomposition (compare with the decomposition of the fundamental representation of $O^+_F$ given by \eqref{eqn:U-decomp}).

\begin{align} \label{eqn:variances}
\Phi=\frac{1}{N_F}\big[(Q^{-1}_{ii}, Q_{jj}) \big]_{1 \le i,j \le N} = \frac{1}{N_F}
\left(\begin{matrix}
 [(\rho_a^2,\rho_b^{-2} )]_{1 \le a,b \le k} & [(\rho_a^2,\rho_b^{2})]_{1 \le a,b \le k} &  [(\rho_a^2,1)]_{k \times (N-2k)}  \\
 [(\rho_a^{-2},\rho_b^{-2})]_{1 \le a,b \le k}  & [(\rho_a^{-2},\rho_b^{2})]_{1 \le a,b \le k} & [(\rho_a^{-2},1)]_{k \times (N-2k)} \\ 
[(1,\rho_b^{-2})]_{(N-2k) \times k} & [(1,\rho_b^{2})]_{(N-2k) \times k} & [(1,1)]_{k \times k}
\end{matrix}\right)
\end{align}

\section{Large (quantum) dimension asymptotics}  

Using our Weingarten formulas (Theorem \ref{thm:moments} and Proposition \ref{prop:starmoments}), we can study their large (quantum) dimension asymptotics.

Let $F \in GL_N(\C)$ be such that $F\bar F = c1$, and let $N_F = \text{Tr}(F^*F)$.  We will call the number $N_F$ the {\it quantum dimension} of $O^+_F$.  Note that we always have $N_F \ge N$.  The following proposition shows that the Weingarten matrices $W_{l,F}$ associated to  $O^+_F$ are asymptotically diagonal as the quantum dimension $N_F$  tends to infinity.  This result should be compared with Theorem 6.1 in \cite{BaCo}.

\begin{thm} \label{thm:Weingartenasymptotics}
For each $l \in 2\N$, we have (as $N_F \to \infty$) that
\[
N_F^{l/2}W_{l,F}(\pi,\sigma) = \delta_{\pi,\sigma} + O(N_F^{-1}) \qquad (\pi,\sigma \in NC_2(l)).
\]
\end{thm}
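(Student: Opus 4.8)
The plan is to exploit the explicit formula for the Gram matrix from Theorem \ref{thm:grammatrix}. We have $G_{l,F}(\pi,\sigma) = c^{l/2+|\pi\vee\sigma|}N_F^{|\pi\vee\sigma|}$, and since $\pi\vee\sigma \le 1_l$ with equality iff... well, the key combinatorial point is that $|\pi\vee\sigma| \le l/2$ for all $\pi,\sigma \in NC_2(l)$, with equality precisely when $\pi = \sigma$ (in which case $|\pi\vee\pi| = |\pi| = l/2$). So after rescaling, set $\tilde G_{l,F} := N_F^{-l/2}G_{l,F}$; then $\tilde G_{l,F}(\pi,\sigma) = c^{l/2+|\pi\vee\sigma|}N_F^{|\pi\vee\sigma|-l/2}$, whose diagonal entries equal $c^l = 1$ and whose off-diagonal entries are $O(N_F^{-1})$ since there $|\pi\vee\sigma| \le l/2 - 1$. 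Thus $\tilde G_{l,F} = \I + E_{l,F}$ where $\I$ is the identity on $\C^{NC_2(l)}$ and $\|E_{l,F}\| = O(N_F^{-1})$ (the size of $NC_2(l)$ is a fixed constant depending only on $l$, so any matrix norm works and absorbs the combinatorial factor).

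Next I would invert. Since $W_{l,F} = G_{l,F}^{-1}$, we have $N_F^{l/2}W_{l,F} = \tilde G_{l,F}^{-1} = (\I + E_{l,F})^{-1}$. For $N_F$ large enough that $\|E_{l,F}\| < 1$, the Neumann series gives $(\I+E_{l,F})^{-1} = \I - E_{l,F} + E_{l,F}^2 - \cdots$, so $(\I+E_{l,F})^{-1} = \I + O(N_F^{-1})$ entrywise, which is exactly the claimed statement $N_F^{l/2}W_{l,F}(\pi,\sigma) = \delta_{\pi,\sigma} + O(N_F^{-1})$.

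The only real content is the combinatorial lemma that $|\pi\vee\sigma| \le l/2$ with equality iff $\pi=\sigma$, for $\pi,\sigma \in NC_2(l)$; this is where I would spend a sentence or two. Each of $\pi,\sigma$ has exactly $l/2$ blocks; the join $\pi\vee\sigma$ is a coarsening of $\pi$, so $|\pi\vee\sigma| \le |\pi| = l/2$, and if $|\pi\vee\sigma| = l/2$ then $\pi\vee\sigma = \pi$, meaning every block of $\sigma$ is contained in a block of $\pi$; since all blocks on both sides are pairs, each block of $\sigma$ equals a block of $\pi$, i.e. $\sigma \le \pi$ and hence (by the same argument with roles swapped, or by counting blocks) $\sigma = \pi$. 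This is elementary and poses no obstacle; the whole proof is short. I would also remark that the $O(N_F^{-1})$ bound is uniform over $\pi,\sigma \in NC_2(l)$ for fixed $l$, but depends on $l$ through the (finite) cardinality of $NC_2(l)$ and through the Neumann series constant, which is harmless since $l$ is fixed.
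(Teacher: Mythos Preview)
Your proposal is correct and follows essentially the same approach as the paper: rescale the Gram matrix by $N_F^{-l/2}$, use Theorem~\ref{thm:grammatrix} together with the observation that $|\pi\vee\sigma|=l/2$ iff $\pi=\sigma$ to write it as $I+O(N_F^{-1})$, and invert via a Neumann series. The paper's proof is nearly identical, though it simply asserts the combinatorial fact about $|\pi\vee\sigma|$ that you take the trouble to justify.
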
 

\begin{proof}
According to Theorem \ref{thm:grammatrix}, we have $W_{l,F} = G_{l,F}^{-1}$ and $G_{l,F}(\pi,\sigma) = c^{l/2+|\pi \vee \sigma|}N_F^{|\pi \vee \sigma|}$.    Observe that 
$|\pi\vee\sigma| = l/2$ if and only if $\pi = \sigma$, and $|\pi\vee\sigma| \le l/2-1$ otherwise.  Therefore we have the following asymptotic formula for $G_{l,F}$ (with respect to the operator norm):
\[
G_{l,F}= N_F^{l/2}I + O(N_F^{l/2 - 1}) = N_F^{l/2}(I + O(N_F^{-1})).
\]
Write $N_F^{-l/2}G_{l,F} = I + A_F$, where $\|A_F\| \le C_{l}N_F^{-1}$ for some $C_l \ge 0$.  Then for sufficiently large $N_F$, we have the absolutely convergent power series expansion 
\[
N_F^{l/2}W_{l,F} = (I+A_F)^{-1} = \sum_{r = 0}^\infty (-1)^rA_F^{r} = I + O(N_F^{-1}) \qquad (N_F \to \infty).
\]
The result now follows.
\end{proof}

The following proposition is a consequence of Theorem \ref{thm:Weingartenasymptotics} and gives an asymptotic factorization of the normalized joint moments over $O^+_F$. 

\begin{prop} \label{prop:asymptotic_factorization}
Fix $l \in 2\N$ and $i, j:[l]\to [N]$.  Then there is a constant $D_l > 0$ (depending only on $l$) such that
\begin{align*}&N_F^{l/2}\Big|h_{O^+_F}(u_{i(1)j(1)}u_{i(2)j(2)} \ldots u_{i(l)j(l)}) - \sum_{\pi \in NC_2(l)} \prod_{(s,t) \in \pi} h_{O^+_F}(u_{i(s)j(s)}u_{i(t)j(t)})\Big|\\
& \le \frac{D_l\max_{\pi, \sigma \in NC_2(l)}|\delta_\pi^F(j)\delta_\sigma^F(i)|}{N_F}
\end{align*}
\end{prop}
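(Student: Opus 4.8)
The plan is to combine the exact Weingarten formula of Theorem~\ref{thm:moments} with the asymptotic diagonalization of $W_{l,F}$ from Theorem~\ref{thm:Weingartenasymptotics}, and then match the resulting diagonal term against the pair-partitioned sum on the right-hand side using the second-order Weingarten formula in the size-$2$ case. First I would apply Theorem~\ref{thm:moments} to write
\[
h_{O^+_F}(u_{i(1)j(1)}\ldots u_{i(l)j(l)}) = \sum_{\pi,\sigma \in NC_2(l)} W_{l,F}(\pi,\sigma)\,\overline{\delta^F_\pi(j)}\,\delta^F_\sigma(i),
\]
multiply through by $N_F^{l/2}$, and insert $N_F^{l/2}W_{l,F}(\pi,\sigma) = \delta_{\pi,\sigma} + R_F(\pi,\sigma)$ with $|R_F(\pi,\sigma)| \le C_l N_F^{-1}$ coming from Theorem~\ref{thm:Weingartenasymptotics}. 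This splits the normalized moment into a \emph{diagonal main term} $\sum_{\pi \in NC_2(l)} \overline{\delta^F_\pi(j)}\,\delta^F_\pi(i)$ plus an \emph{error term} $\sum_{\pi,\sigma} R_F(\pi,\sigma)\,\overline{\delta^F_\pi(j)}\,\delta^F_\sigma(i)$.

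Second, I would control the error term crudely: it is bounded by $|NC_2(l)|^2 \cdot C_l N_F^{-1}\cdot \max_{\pi,\sigma}|\delta^F_\pi(j)\delta^F_\sigma(i)|$, which has exactly the shape of the claimed bound with $D_l$ absorbing $C_l|NC_2(l)|^2$ (here I use $|\overline{\delta^F_\pi(j)}| = |\delta^F_\pi(j)|$). Third, I would identify the diagonal main term with the sum over pair partitions on the right-hand side of the proposition. For a fixed $\pi \in NC_2(l)$, the product $\prod_{(s,t)\in\pi} h_{O^+_F}(u_{i(s)j(s)}u_{i(t)j(t)})$ is, by the $l=2$ case of Theorem~\ref{thm:moments} (equivalently the Schur relation \eqref{eqn:Schur} with the single pairing $\{1,2\}$), equal to $\prod_{(s,t)\in\pi} W_{2,F}(\cdot,\cdot)\,\overline{F_{j(t)j(s)}}\,F_{i(t)i(s)}$; since $|NC_2(2)|=1$ we have $W_{2,F} = N_F^{-1}$, so each factor is $N_F^{-1}\overline{F_{j(t)j(s)}}\,F_{i(t)i(s)}$ and the whole product is $N_F^{-l/2}\,\overline{\delta^F_\pi(j)}\,\delta^F_\pi(i)$. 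Summing over $\pi$ and multiplying by $N_F^{l/2}$ recovers exactly $\sum_{\pi \in NC_2(l)}\overline{\delta^F_\pi(j)}\,\delta^F_\pi(i)$, so the diagonal main term cancels against the subtracted sum, leaving only the error term. Assembling the two bounds completes the proof.

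The only genuinely delicate point is the bookkeeping in the third step: one must check that the definition $\delta^F_\pi(i) = \prod_{(s,t)\in\pi} F_{i(t)i(s)}$ factors correctly across the blocks of $\pi$ and that the conjugate in $\overline{\delta^F_\pi(j)}$ is handled consistently, so that $\prod_{(s,t)\in\pi} h_{O^+_F}(u_{i(s)j(s)}u_{i(t)j(t)})$ really does reproduce $N_F^{-l/2}\overline{\delta^F_\pi(j)}\delta^F_\pi(i)$ with no stray $c$ or $\rho$ factors. This is purely a matter of unwinding definitions (and noting $\text{Tr}(F^*F)=N_F$ forces $W_{2,F}=N_F^{-1}$), so I expect it to be routine rather than an obstacle; the error estimate itself is immediate from Theorem~\ref{thm:Weingartenasymptotics}. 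An alternative, slightly slicker route would be to observe that $\sum_{\pi\in NC_2(l)}\overline{\delta^F_\pi(j)}\delta^F_\pi(i)$ is precisely $N_F^{l/2}$ times the Gaussian/pair-partitioned moment and quote Theorem~\ref{thm:mc-formulas}(2) after rescaling, but the direct computation above is self-contained and avoids introducing the limiting variables prematurely.
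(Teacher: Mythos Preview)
Your proposal is correct and follows essentially the same route as the paper: apply the Weingarten moment formula, split off the diagonal using the asymptotic $N_F^{l/2}W_{l,F}(\pi,\sigma)=\delta_{\pi,\sigma}+O(N_F^{-1})$, identify the diagonal with the pair-partitioned sum via the $l=2$ computation $h_{O^+_F}(u_{i(s)j(s)}u_{i(t)j(t)})=N_F^{-1}\overline{F_{j(t)j(s)}}F_{i(t)i(s)}$, and bound the off-diagonal remainder crudely by the maximum of $|\delta_\pi^F(j)\delta_\sigma^F(i)|$. The only cosmetic difference is that the paper packages the Weingarten error as an $\ell^1$ bound $\sum_{\pi,\sigma}|N_F^{l/2}W_{l,F}(\pi,\sigma)-\delta_{\pi,\sigma}|\le D_l/N_F$ directly, whereas you bound each entry and multiply by $|NC_2(l)|^2$; these are equivalent up to the choice of $D_l$.
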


\begin{proof}
Using Theorem \ref{thm:Weingartenasymptotics}, one can find a constant $D_l > 0$ such that 
\[\sum_{\pi,\sigma \in NC_2(l)} |N_F^{l/2}W_{l,F}(\pi,\sigma) - \delta_{\pi,\sigma}| \le \frac{D_l}{N_F}.\]   Since it also follows from Theorem \ref{thm:moments} that
\begin{align*}\sum_{\pi \in NC_2(l)} \prod_{(s,t) \in \pi} h_{O^+_F}(u_{i(s)j(s)}u_{i(t)j(t)}) \
&= \sum_{\pi \in NC_2(l)} \prod_{(s,t) \in \pi} N_F^{-1} F_{i(t)i(s)}\overline{F_{j(t)j(s)}} \\
&=N_F^{-l/2} \sum_{\pi \in NC_2(l)}\delta_{\pi}^F(i) \overline{\delta_\pi^F(j)},
\end{align*} 
we obtain
\begin{align*}
&N_F^{l/2}\Big|h_{O^+_F}(u_{i(1)j(1)}u_{i(2)j(2)} \ldots u_{i(l)j(l)}) - \sum_{\pi \in NC_2(l)} \prod_{(s,t) \in \pi} h_{O^+_F}(u_{i(s)j(s)}u_{i(t)j(t)})\Big|\\ 
&=\Big|\sum_{\pi,\sigma \in NC_2(l)} N_F^{l/2}(W_{l,F}(\pi,\sigma) -\delta_{\pi,\sigma})\overline{\delta^F_\pi(j)} \delta^F_\sigma(i)\Big| \\
& \le \frac{D_l\max_{\pi, \sigma \in NC_2(l)}|\delta_\pi^F(j)\delta_\sigma^F(i)|}{N_F}.
\end{align*}
\end{proof}

Using Propositions  \ref{prop:starmoments} and \ref{prop:asymptotic_factorization}, we obtain a similar asymptotic factorization result for $\ast$-moments.  

\begin{cor} \label{cor:asymptotic_factorization_star}
Fix $l \in 2\N$, $\epsilon:[l] \to \{1,\ast\}$ and $i, j:[l]\to [N]$.  Then there is a constant $D_l > 0$ (depending only on $l$) such that
\begin{align}\label{asfact}\begin{split}&N_F^{l/2}\Big|h_{O^+_F}(u_{i(1)j(1)}^{\epsilon(1)}\ldots u_{i(l)j(l)}^{\epsilon(l)}) - \sum_{\pi \in NC_2(l)} \prod_{(s,t) \in \pi} h_{O^+_F}(u_{i(s)j(s)}^{\epsilon(s)}u_{i(t)j(t)}^{\epsilon(t)})\Big|\\
 & \le \frac{D_l\max_{\pi, \sigma \in NC_2(l)}| \delta_\pi^F(j_\epsilon)\delta_\sigma^F(i_\epsilon)| \prod_{r=1}^l|t_F(i(r),j(r), \epsilon(r))|}{N_F} .
\end{split}
\end{align}
\end{cor}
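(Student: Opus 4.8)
The plan is to deduce the $\ast$-moment estimate \eqref{asfact} from the ordinary moment factorization of Proposition~\ref{prop:asymptotic_factorization} by extracting the scalar coefficients $t_F$ furnished by Proposition~\ref{prop:starmoments}. Set $T := \prod_{r=1}^l t_F(i(r),j(r),\epsilon(r))$, which is a real number by \eqref{eqn:t_F}. First I would rewrite, using Proposition~\ref{prop:starmoments},
\[h_{O^+_F}\big(u_{i(1)j(1)}^{\epsilon(1)}\ldots u_{i(l)j(l)}^{\epsilon(l)}\big) = T\, h_{O^+_F}\Big(\prod_{r=1}^l u_{i(r)_{\epsilon(r)},\, j(r)_{\epsilon(r)}}\Big),\]
so that the first term inside the absolute value in \eqref{asfact} is $T$ times an ordinary moment of unstarred generators with the rescaled multi-indices $i_\epsilon$ and $j_\epsilon$.

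Next I would handle the sum of products of two-point functions. Applying Proposition~\ref{prop:starmoments} to a two-letter word gives, for any block $(s,t)$ of a pairing $\pi \in NC_2(l)$,
\[h_{O^+_F}\big(u_{i(s)j(s)}^{\epsilon(s)} u_{i(t)j(t)}^{\epsilon(t)}\big) = t_F(i(s),j(s),\epsilon(s))\, t_F(i(t),j(t),\epsilon(t))\, h_{O^+_F}\big(u_{i(s)_{\epsilon(s)},\, j(s)_{\epsilon(s)}}\, u_{i(t)_{\epsilon(t)},\, j(t)_{\epsilon(t)}}\big).\]
Since a pairing of $[l]$ meets every index $r \in [l]$ in exactly one block, taking the product over $(s,t) \in \pi$ collects precisely the coefficient $T$; hence
\[\sum_{\pi \in NC_2(l)} \prod_{(s,t)\in\pi} h_{O^+_F}\big(u_{i(s)j(s)}^{\epsilon(s)} u_{i(t)j(t)}^{\epsilon(t)}\big) = T \sum_{\pi \in NC_2(l)} \prod_{(s,t)\in\pi} h_{O^+_F}\big(u_{i(s)_{\epsilon(s)},\, j(s)_{\epsilon(s)}}\, u_{i(t)_{\epsilon(t)},\, j(t)_{\epsilon(t)}}\big).\]
Combining the last two displays, the left-hand side of \eqref{asfact} equals $|T|$ times exactly the quantity estimated in Proposition~\ref{prop:asymptotic_factorization} for the multi-indices $i_\epsilon$ and $j_\epsilon$. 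Applying that proposition with $i_\epsilon, j_\epsilon$ in the roles of $i, j$ bounds this quantity by $D_l N_F^{-1} \max_{\pi,\sigma \in NC_2(l)} |\delta_\pi^F(j_\epsilon)\delta_\sigma^F(i_\epsilon)|$ with the same constant $D_l$, and multiplying through by $|T| = \prod_{r=1}^l |t_F(i(r),j(r),\epsilon(r))|$ yields \eqref{asfact}.

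There is no substantial obstacle here: the argument is bookkeeping once Propositions~\ref{prop:starmoments} and~\ref{prop:asymptotic_factorization} are in hand. The one point worth stating carefully is that the scalar coefficients attached to the paired two-point functions recombine into the single global coefficient $T$, which holds precisely because $\pi \in NC_2(l)$ is a pairing and so assigns each letter of the word to exactly one pair. One should also record, as already noted after \eqref{eqn:t_F}, that each $t_F(i(r),j(r),\epsilon(r))$ is real, so that $|T|$ factors as $\prod_{r=1}^l |t_F(i(r),j(r),\epsilon(r))|$ and passage to absolute values causes no difficulty.
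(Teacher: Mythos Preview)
Your argument is correct and is exactly the approach the paper intends: it states the corollary as an immediate consequence of Propositions~\ref{prop:starmoments} and~\ref{prop:asymptotic_factorization} without giving further details. Your bookkeeping observation that the product over blocks of a pairing recombines the pairwise $t_F$-factors into the global coefficient $T$ is precisely the point that makes the reduction work.
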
 

\section{Asymptotic freeness in $O^+_F$} \label{section:freeness}

We now arrive at the main asymptotic freeness results of this paper.  

Let us fix a canonical matrix $F = F^{(c)}_\rho = \left(\begin{matrix}0 & D_k(\rho) &0 \\
cD_k(\rho)^{-1} &0 &0 \\
0 & 0 & 1_{N-2k}\end{matrix}\right) \in \text{GL}_N(\C)$ as in Section \ref{section:integrate}.
Recall from Remark \ref{rem:subset-generators} that the subset of (rescaled) matrix elements \[\mc S_F = \{\sqrt{N_F}u_{ij}\}_{\substack{1 \le i \le k \\ 1 \le j \le N}} \cup \{\sqrt{N_F}u_{ij}\}_{\substack{1 \le j \le k \\ 2k+1 \le i \le N}}\cup \{\sqrt{N_F}u_{ij}\}_{2k+1 \le i,j \le N}\]  generates the C$^\ast$-algebra $C(O^+_F)$.  In this section we show that the set $\mc S_F$ is asymptotically free in the following sense.

\begin{thm} \label{thm:asymptotic-freeness-general}
Let $\mc S = \{y_{ij}\}_{\substack{1 \le i \le k \\ 1 \le j \le N}} \cup \{y_{ij}\}_{\substack{1 \le j \le k \\ 2k+1 \le i \le N}}\cup \{y_{ij}\}_{2k+1 \le i,j \le N}$ be a family of non-commutative random variables in a NCPS $ (A, \varphi)$ with the following properties.
\begin{enumerate}
\item $\mc S$ is freely independent.
\item For each $i,j$, the elements $y_{ij} \in \mc S$ and  $\sqrt{N_F}u_{ij} \in \mc S_F$ have  the same left and right variances, given by the matrix entries of $N_F\Phi$ in equation \eqref{eqn:variances}. 
\item If either $i \le k$ or $j \le k$, then each $y_{ij}$ is a generalized circular element.
\item If $2k+1 \le i,j \le N$, then $y_{ij}$ is a standard semicircular element. 
\end{enumerate}
Then for each  $l \in 2\N$, there is a constant $D_l > 0$ such that 
\begin{align*}&\Big|h_{O^+_F}(\sqrt{N_F}u_{i(1)j(1)}^{\epsilon(1)}\ldots \sqrt{N_F}u_{i(l)j(l)}^{\epsilon(l)}) - \varphi\big(y_{i(1)j(1)}^{\epsilon(1)}\ldots y_{i(l)j(l)}^{\epsilon(l)}\big)\Big|\\
 & \le \frac{D_l\max_{\pi, \sigma \in NC_2(l)}| \delta_\pi^F(j_\epsilon)\delta_\sigma^F(i_\epsilon)| \prod_{r=1}^l|t_F(i(r),j(r), \epsilon(r))|}{N_F}, 
\end{align*}
for each $\epsilon:[l] \to \{1,\ast\}$ and $i, j:[l]\to [N]$. 
\end{thm}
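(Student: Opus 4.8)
The plan is to compare the two sides of the inequality in three stages, using the combinatorial characterization of free generalized circular/semicircular families (Theorem \ref{thm:mc-formulas}) to rewrite $\varphi\big(y_{i(1)j(1)}^{\epsilon(1)}\ldots y_{i(l)j(l)}^{\epsilon(l)}\big)$ as a sum of products of second moments, and then to match those second moments against the pair-partitioned sum appearing in Corollary \ref{cor:asymptotic_factorization_star}. First, I would record the exact values of the left and right variances of each $y_{ij}$ from \eqref{eqn:variances}, and note that by hypotheses (2)--(4) together with Theorem \ref{thm:mc-formulas}(2), the family $\mc S$ (after trivially absorbing the semicircular generators as $(\alpha,\alpha)$-generalized circular elements) is a free generalized circular family, so
\[
\varphi\big(y_{i(1)j(1)}^{\epsilon(1)}\ldots y_{i(l)j(l)}^{\epsilon(l)}\big)
= \sum_{\substack{\pi \in NC_2^{\epsilon}(l) \\ \ker(r) \ge \pi}} \prod_{(s,t) \in \pi} \varphi\big(y_{i(s)j(s)}^{\epsilon(s)}y_{i(t)j(t)}^{\epsilon(t)}\big),
\]
where $r = r_{i,j,\epsilon}$ is the function tracking which generator (and adjoint) occupies each position. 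The key book-keeping point is that the constraints ``$\pi \in NC_2^\epsilon(l)$'' and ``$\ker(r) \ge \pi$'' are exactly what forces a pair $(s,t) \in \pi$ to contribute a nonzero second moment: the two positions must carry opposite $\ast$-decorations and must involve matching index data, in which case $\varphi(y^{\epsilon(s)}y^{\epsilon(t)})$ equals the appropriate entry of $N_F\Phi$, i.e.\ $N_F^{-1}$ times a product of $Q^{\pm 1}$-entries.

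Second, I would expand the left-hand side using Corollary \ref{cor:asymptotic_factorization_star}: up to the stated error term $\tfrac{D_l \max|\delta^F_\pi(j_\epsilon)\delta^F_\sigma(i_\epsilon)|\prod|t_F|}{N_F}$ (after multiplying through by $N_F^{l/2}$, which is exactly the normalization supplied by the $\sqrt{N_F}$ factors), we have
\[
h_{O^+_F}\big(\sqrt{N_F}u_{i(1)j(1)}^{\epsilon(1)}\ldots \sqrt{N_F}u_{i(l)j(l)}^{\epsilon(l)}\big)
\approx N_F^{l/2}\sum_{\pi \in NC_2(l)} \prod_{(s,t)\in\pi} h_{O^+_F}\big(u_{i(s)j(s)}^{\epsilon(s)} u_{i(t)j(t)}^{\epsilon(t)}\big).
\]
Then I would show that this last double-product sum coincides termwise with the free-probability sum from the first step. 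The crucial claim is that a pair $\{s,t\}$ contributes a nonzero Haar-state two-point function $h_{O^+_F}(u^{\epsilon(s)}u^{\epsilon(t)})$ precisely when $\pi$ respects the $\epsilon$-decoration and index-matching constraints — this follows from the Schur orthogonality relations \eqref{eqn:Schur} combined with the adjoint formula \eqref{eqn:adjoint}/\eqref{eqn:t_F}, which rewrite $u_{ij}^\ast$ as a scalar times some $u_{i'j'}$ — and that in that case $h_{O^+_F}(u_{i(s)j(s)}^{\epsilon(s)} u_{i(t)j(t)}^{\epsilon(t)})$ equals exactly $\varphi\big(y_{i(s)j(s)}^{\epsilon(s)}y_{i(t)j(t)}^{\epsilon(t)}\big)$, by hypothesis (2). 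Here one must also check that, within $NC_2(l)$, the pairings surviving with all-nonzero factors are exactly those in $NC_2^\epsilon(l)$ compatible with $\ker(r)$: a pairing linking two positions with the same $\ast$-type, or incompatible indices, kills the product, while the reverse inclusion is automatic.

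Third, I would assemble the estimate: the two sums being identical, the difference $\big|h_{O^+_F}(\cdots) - \varphi(\cdots)\big|$ is bounded by the error term from Corollary \ref{cor:asymptotic_factorization_star}, which is exactly the claimed bound. Strictly, one must apply the corollary to the product $\prod_r u_{i(r)_{\epsilon(r)} j(r)_{\epsilon(r)}}$ of \emph{non}-starred generators obtained from Proposition \ref{prop:starmoments}, pick up the scalar $\prod_r t_F(i(r),j(r),\epsilon(r))$, and verify that the resulting $\delta^F$-factors are the ones written in terms of $i_\epsilon, j_\epsilon$ — this is precisely how Corollary \ref{cor:asymptotic_factorization_star} is already stated, so no new work is needed there. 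The main obstacle, I expect, is the careful combinatorial verification in the second step that the nonvanishing pairings on the Haar side and the nonvanishing pairings on the free side are literally the same set, with matching coefficients — in particular tracking how $\epsilon$-decorations interact with the index relabelling $i \mapsto i_\epsilon$ coming from \eqref{eqn:adjoint}, and confirming that the second-moment normalizations coming from \eqref{eqn:Schur} agree with the entries of $N_F\Phi$ assigned to the $y_{ij}$ in hypothesis (2). Once that bijection-with-weights is established, the rest is bookkeeping.
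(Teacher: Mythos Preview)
Your proposal is correct and follows essentially the same route as the paper's proof. The paper is simply terser: it writes the free moment directly as
\[
\varphi\big(y_{i(1)j(1)}^{\epsilon(1)}\cdots y_{i(l)j(l)}^{\epsilon(l)}\big)=\sum_{\pi\in NC_2(l)}\prod_{(s,t)\in\pi}\varphi\big(y_{i(s)j(s)}^{\epsilon(s)}y_{i(t)j(t)}^{\epsilon(t)}\big)=\sum_{\pi\in NC_2(l)}\prod_{(s,t)\in\pi}N_F\,h_{O^+_F}\big(u_{i(s)j(s)}^{\epsilon(s)}u_{i(t)j(t)}^{\epsilon(t)}\big),
\]
summing over \emph{all} of $NC_2(l)$ rather than over $NC_2^\epsilon(l)$ with $\ker r\ge\pi$, and then invokes Corollary~\ref{cor:asymptotic_factorization_star}. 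The point is that the constraints you track (matching $\epsilon$-decoration and matching indices) are precisely the conditions under which the pair covariance is nonzero, on \emph{both} sides simultaneously; so one may as well sum over the full $NC_2(l)$ and let the zero factors do the bookkeeping. Your ``main obstacle'' --- verifying that the nonvanishing pairings and their weights coincide on the Haar and free sides --- thus dissolves into the single observation that $\varphi(y_{ab}^{\epsilon}y_{cd}^{\epsilon'})=N_F\,h_{O^+_F}(u_{ab}^{\epsilon}u_{cd}^{\epsilon'})$ for \emph{every} choice of indices and decorations, zero or not, which follows from hypotheses (2)--(4), the Schur relations~\eqref{eqn:Schur}, and the block structure of the canonical $F$.

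One small imprecision: a standard semicircular element is not an $(\alpha,\alpha)$-generalized circular element (the latter is not self-adjoint). What you actually need, and what Theorem~\ref{thm:mc-formulas} delivers, is that both families have vanishing free cumulants of all orders except two, so the mixed moment of any free family built from them is the $NC_2$-sum of products of second moments. That is the common umbrella, not a literal embedding of semicirculars into generalized circulars.
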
    

\begin{proof}
Since $\mc S$ is a free family consisting of generalized circular elements and standard semicircular elements satisfying conditions (2)-(4) above, Theorem \ref{thm:mc-formulas} gives
\begin{align*}
\varphi\big(y_{i(1)j(1)}^{\epsilon(1)}\ldots y_{i(l)j(l)}^{\epsilon(l)}\big) &= \sum_{\pi \in NC_2(l)} \prod_{(s,t) \in \pi} \varphi(y_{i(s)j(s)}^{\epsilon(s)}y_{i(t)j(t)}^{\epsilon(t)})\\
&=\sum_{\pi \in NC_2(l)} \prod_{(s,t) \in \pi}  N_Fh_{O^+_F}(u_{i(s)j(s)}^{\epsilon(s)}u_{i(t)j(t)}^{\epsilon(t)}).
\end{align*} 
The theorem now follows from Corollary \ref{cor:asymptotic_factorization_star}.
\end{proof}

\subsection{Asymptotic freeness in the large dimension limit}

Using Theorem \ref{thm:asymptotic-freeness-general}, we see that the quantum groups  $O^+_F$ provide asymptotic models for almost-periodic free Araki-Woods factors. That is, canonical generators of almost-periodic free Araki-Woods factors can be approximated in joint distribution by normalized coordinates over a suitable sequence of $O^+_F$ quantum groups.

To see this, let $\Gamma(H_\R,U_t)''$ be an almost-periodic free Araki-Woods factor.  Then we can write $\Gamma(H_\R, U_t)'' = (z_i : i \in \N)''$, where $(z_i)_{i \in \N}$ is a free family
of $(1, \lambda_i)$-generalized circular elements $z_i$ (with $1 < \lambda_i < \infty$).  To approximate the joint $\ast$-distribution of $(z_i)_{i \in \N}$, define, for each $k \in \N$, \[F(k)  = \left(\begin{matrix}0 &D_{k+1}(1,\sqrt{\lambda_1}, \ldots, \sqrt{\lambda_k})^{-1}  \\
-D_{k+1}(1,\sqrt{\lambda_1}, \ldots, \sqrt{\lambda_k})  &0 \end{matrix}\right) \in \text{GL}_{2k+2}(\C).\]

\begin{thm} \label{thm:asymptotic-freeness-large-rank}
The family of non-commutative random variables $(z_{i,k})_{i=1}^k = (\sqrt{N_{F(k)}}u_{1,i+1})_{i = 1}^k \subset (C(O^+_{F(k)}), h_{O^+_{F(k)}})$ converges in joint distribution as $k \to \infty$ to $(z_i)_{i \in \N} \subset (\Gamma(H_\R,U_t)'', \varphi_\Omega)$.    
\end{thm}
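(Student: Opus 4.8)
The plan is to obtain Theorem~\ref{thm:asymptotic-freeness-large-rank} as a direct corollary of the quantitative asymptotic freeness estimate in Theorem~\ref{thm:asymptotic-freeness-general}, applied along the sequence $(F(k))_{k\ge 1}$. First I would record the structure of $F(k)$. By Theorem~\ref{thm:repfromgenerators} we may write $\Gamma(H_\R,U_t)''=(z_i:i\in\N)''$ for a free family of $(1,\lambda_i)$-generalized circular elements $z_i$ with $\lambda_i>1$, and $F(k)$ is exactly of the canonical block form of Section~\ref{section:integrate} with $c=-1$, ambient dimension $N=2k+2$, block size $k+1$, and $\rho$-data equal to the diagonal $(1,\lambda_1^{-1/2},\dots,\lambda_k^{-1/2})\in(0,1]^{k+1}$ of $D_{k+1}(1,\sqrt{\lambda_1},\dots,\sqrt{\lambda_k})^{-1}$ (that these entries are not listed in non-decreasing order is immaterial: none of the formulas of Sections~\ref{section:integrate}--\ref{section:freeness} use that normalization). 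In particular the row index $1$ lies in the ``generalized circular'' range $\{1,\dots,k+1\}$, so each generator $u_{1,i+1}$ with $1\le i\le k$ belongs to the generating subset $\mc S_{F(k)}$ of Remark~\ref{rem:subset-generators}. A short computation gives $Q=F(k)^t\overline{F(k)}=\text{diag}(1,\lambda_1,\dots,\lambda_k,1,\lambda_1^{-1},\dots,\lambda_k^{-1})$, so by \eqref{eqn:Schur} (equivalently, reading off \eqref{eqn:variances}) the rescaled generator $z_{i,k}=\sqrt{N_{F(k)}}\,u_{1,i+1}$ has left variance $(Q^{-1})_{11}=1$ and right variance $Q_{i+1,i+1}=\lambda_i$, i.e.\ the same left and right variances as $z_i$.

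Next I would set up the model. For each $k$ fix a free family $\mc S^{(k)}=\{y^{(k)}_{ij}\}$ in a non-commutative probability space $(A_k,\varphi_k)$ satisfying conditions (1)--(4) of Theorem~\ref{thm:asymptotic-freeness-general} for $F=F(k)$; such a family exists, built on a full Fock space as a free product of semicircular and generalized circular elements with the prescribed variances. By the previous paragraph the slot $y^{(k)}_{1,i+1}$ is a generalized circular element with left variance $1$ and right variance $\lambda_i$, and since the $\ast$-distribution of a generalized circular element is determined by its two variances (Subsection~\ref{section:FAW}) while $\mc S^{(k)}$ is free, Theorem~\ref{thm:mc-formulas}(2) shows that $(y^{(k)}_{1,i+1})_{i=1}^k$ and $(z_i)_{i=1}^k$ are identically distributed. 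Now fix $l\in 2\N$, $\epsilon:[l]\to\{1,\ast\}$ and $a_1,\dots,a_l\in[m]$ (odd $l$ is trivial, both moments vanishing). Applying Theorem~\ref{thm:asymptotic-freeness-general} to $O^+_{F(k)}$ with the constant row multi-index $1$ and column multi-index $(a_1+1,\dots,a_l+1)$, and using the identical-distribution fact, gives for all large $k$
\[
\Big|h_{O^+_{F(k)}}\big(z_{a_1,k}^{\epsilon(1)}\cdots z_{a_l,k}^{\epsilon(l)}\big)-\varphi_\Omega\big(z_{a_1}^{\epsilon(1)}\cdots z_{a_l}^{\epsilon(l)}\big)\Big|\le \frac{D_l\,M_k}{N_{F(k)}},
\]
where $D_l$ depends only on $l$ and $M_k=\max_{\pi,\sigma\in NC_2(l)}|\delta^{F(k)}_\pi(j_\epsilon)\,\delta^{F(k)}_\sigma(i_\epsilon)|\prod_{s=1}^l|t_{F(k)}(1,a_s+1,\epsilon(s))|$. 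The theorem then reduces to showing the right-hand side tends to $0$ as $k\to\infty$, after which summing over the finitely many $\ast$-monomials of an arbitrary $\ast$-polynomial $P$ in $z_1,\dots,z_m$ gives $\lim_k h_{O^+_{F(k)}}(P(z_{1,k},\dots,z_{m,k}))=\varphi_\Omega(P(z_1,\dots,z_m))$, i.e.\ convergence in joint distribution.

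I expect the only non-routine step to be the uniform boundedness of $M_k$ as $k\to\infty$; the divergence $N_{F(k)}=\Tr(F(k)^*F(k))=\Tr(Q^{-1})=2+\sum_{i=1}^k(\lambda_i+\lambda_i^{-1})\ge 2k+2\to\infty$ is immediate. For $M_k$, the point is that because $k$ is large while the column indices $a_s+1$ stay in the fixed window $\{2,\dots,m+1\}$, the $\ast$-adjoint substitution \eqref{eqn:adjoint}--\eqref{eqn:t_F} only ever produces multi-indices $i_\epsilon$ with values in $\{1,k+2\}$ and $j_\epsilon$ with values in $\{2,\dots,m+1\}\cup\{k+3,\dots,k+m+2\}$, scalars $|t_{F(k)}(1,a_s+1,\epsilon(s))|\in\{1,\lambda_1^{1/2},\dots,\lambda_m^{1/2}\}$, and hence every nonzero factor of $F(k)$ that can appear in $\delta^{F(k)}_\pi(i_\epsilon)$ or $\delta^{F(k)}_\pi(j_\epsilon)$ has modulus in $\{1,\lambda_1^{\pm1/2},\dots,\lambda_m^{\pm1/2}\}$. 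Thus $M_k\le C$ for a constant $C=C(l,m,\lambda_1,\dots,\lambda_m)$ independent of $k$, so the error term goes to $0$ and the result follows. In summary, the proof is a bookkeeping reduction to Theorem~\ref{thm:asymptotic-freeness-general}, the bounded-prefactor verification being the only ingredient requiring attention.
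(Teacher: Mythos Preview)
Your proof is correct and follows essentially the same route as the paper: both reduce directly to the quantitative estimate of Theorem~\ref{thm:asymptotic-freeness-general}, verify that $(z_{i,k})$ and $(z_i)$ share left and right variances, and then observe that the numerator in the error bound stays bounded while $N_{F(k)}\ge 2k+2\to\infty$. The only difference is one of explicitness: the paper simply asserts that the quantities $\max_{\pi,\sigma}|\delta_\pi^{F(k)}(1_\epsilon)\delta_\sigma^{F(k)}((i+1)_\epsilon)|$ and $\prod_r|t_{F(k)}(1,i(r)+1,\epsilon(r))|$ are \emph{constant} once $l,i,\epsilon$ are fixed (which is true for $k\ge\max_r i(r)$, since the relevant matrix entries of $F(k)$ are determined by $\lambda_1,\dots,\lambda_m$ alone), whereas you carry out the bookkeeping in detail and arrive at the weaker but sufficient conclusion that $M_k$ is bounded. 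Your intermediate step of building a full model family $\mc S^{(k)}$ and then identifying its $(1,i+1)$-slots with $(z_i)_{i=1}^k$ in distribution is a clean way to make the application of Theorem~\ref{thm:asymptotic-freeness-general} precise; the paper leaves this identification implicit.
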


\begin{proof}  By construction, $(z_{i,k})_{i = 1}^k$ and $(z_i)_{i = 1}^k$ have the same left and right variances.  By Theorem \ref{thm:asymptotic-freeness-general}, we then have for any $l \in 2\N$, $\epsilon:[l]\to \{1,\ast\}$, $i:[l] \to \N$, and $k = k(i)$ sufficiently large,
\begin{align*}&\Big|h_{O^+_{F(k)}}(z_{i(1),k}^{\epsilon(1)}\ldots z_{i(l),k}^{\epsilon(l)}) - \varphi_\Omega\big(z_{i(1)}^{\epsilon(1)}\ldots z_{i(l)}^{\epsilon(l)}\big)\Big|\\
 & \le \frac{D_l\max_{\pi, \sigma \in NC_2(l)}| \delta_\pi^{F(k)}(1_\epsilon)\delta_\sigma^{F(k)}((i+1)_\epsilon)| \prod_{r=1}^l|t_{F(k)}(1,i(r)+1, \epsilon(r))|}{N_{F(k)}}.
\end{align*}
Since both quantities $\max_{\pi, \sigma \in NC_2(l)}| \delta_\pi^{F(k)}(1_\epsilon)\delta_\sigma^{F(k)}((i+1)_\epsilon)|$ and $\prod_{r=1}^l|t_{F(k)}(1,i(r)+1, \epsilon(r))|$ are constant once $l,i$ and $\epsilon$ are fixed, and $N_{F(k)}  = \Tr(F(k)^*F(k)) \ge \Tr(1) =2k+2$, we conclude that
\[
 \Big|h_{O^+_{F(k)}}(z_{i(1),k}^{\epsilon(1)}\ldots z_{i(l),k}^{\epsilon(l)}) - \varphi_\Omega\big(z_{i(1)}^{\epsilon(1)}\ldots z_{i(l)}^{\epsilon(l)}\big)\Big| \le \frac{\text{Constant}}{2k+2} \to 0.
\]
\end{proof}

\begin{rem} \label{rem:all-variables} 
Using the same reasoning as in the proof of Theorem \ref{thm:asymptotic-freeness-large-rank}, it is easy to see that for the above sequence of quantum groups $(O^+_{F(k)})_{k \in \N}$, the entire family of normalized generators \[\mc S_{F(k)} = \{\sqrt{N_{F(k)}}u_{ij}\}_{1 \le i,j \le k} \cup \{\sqrt{N_{F(k)}}u_{i,j+k}\}_{1 \le i,j \le k} \] of $C(O^+_{F(k)})$ converges in distribution to a free family of generalized circular elements $\{y_{ij}\}_{1 \le i,j < \infty} \cup \{w_{ij}\}_{1 \le i,j < \infty}$ in a NCPS $(A,\varphi)$ with the following left and right variances (determined by Theorem \ref{thm:asymptotic-freeness-general}):
\[\varphi(y_{ij}^*y_{ij}) = \rho_i^{2}, \quad \varphi(y_{ij}y_{ij}^*) = \rho_j^{-2} \quad \varphi(w_{ij}^*w_{ij}) = \rho_i^2, \quad \varphi(w_{ij}w_{ij}^*) = \rho_j^{2}, 
\]
where $\rho_1 = 1$ and $\rho_i = \lambda_{i-1}^{-1/2}$ for $i \ge 2$.  Note also  that there is a state-preserving $\ast$-isomorphism $W^*(\{y_{ij}\}_{1 \le i,j < \infty} \cup \{w_{ij}\}_{1 \le i,j < \infty})$ and $W^*((z_i)_{i \in \N}) = \Gamma(H_\R, U_t)'' $, the almost-periodic free Araki-Woods factor we started with. (This isomorphism follows from \cite[Theorem 6.4]{Sh}). 
\end{rem}

\subsection{Asymptotic freeness in finite dimensions}
In Theorem \ref{thm:asymptotic-freeness-large-rank}, we saw that normalized generators of a suitable family of $O^+_F$'s  converge in distribution to free random variables as the size $N$ of the matrices $F \in \text{GL}_N(\C)$ go to infinity.  On the other hand, the general estimate of Theorem \ref{thm:asymptotic-freeness-general} shows that in the non-tracial setting, the rate of approximation to freeness is governed by the growth of the quantum dimension $N_F = \Tr(F^*F)$, and not the physical dimension $N$.  This new phenomenon allows one to consider scenarios where $N_F \to \infty$, while the dimension $N$ of $F \in \text{GL}_N(\C)$ is fixed.  This is illustrated by the next theorem. 

\begin{thm} \label{thm:asymptotic-freeness-large-qrank}
Fix $k \in \N$ and a sequence $\rho = (\rho_1, \ldots, \rho_k) \in (0,1)^k$ and let 
\[F(\gamma) = \left(\begin{matrix}0 &D_{k+1}(\rho)&0&0 \\
D_{k+1}(\rho)^{-1}  &0&0&0 \\
0&0&0&\gamma \\
 0&0&\gamma^{-1}&0\end{matrix}\right) \in \text{GL}_{2k+2}(\C)  \qquad (0 < \gamma < 1).\]  Then the subset of generators \[\tilde{\mc S}_{F(\gamma)} = \{\sqrt{N_{F(\gamma)}}u_{ij}\}_{\substack{1 \le i \le k \\ 1 \le j \le 2k}}\subset (C(O^+_{F(\gamma)}), h_{O^+_{F(\gamma)}})\] converges in distribution to a free
family of generalized circular elements \[\tilde{\mc S} = \{y_{ij}\}_{\substack{1 \le i \le k \\ 1 \le j \le 2k}}\] in a NCPS $(A,\varphi)$ with left and right variances given by 
\[\varphi(y_{ij}^*y_{ij}) = \rho_i^{2}, \quad \varphi(y_{ij}y_{ij}^*) = \Bigg\{ \begin{matrix} \rho_j^{-2} & 1 \le j \le k \\
\rho_j^2 & k+1 \le j \le 2k\end{matrix}. \] 
\end{thm}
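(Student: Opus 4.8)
The plan is to follow the same route as the proof of Theorem~\ref{thm:asymptotic-freeness-large-rank}, but now keeping the physical dimension $N = 2k+2$ fixed and instead driving $N_{F(\gamma)} = \Tr(F(\gamma)^*F(\gamma)) \to \infty$ as $\gamma \to 0^+$. First I would record that $N_{F(\gamma)} = \sum_{i=1}^{k+1}(\rho_i^2 + \rho_i^{-2}) + \gamma^2 + \gamma^{-2}$ (with the convention $\rho_{k+1}$ absorbed appropriately — more precisely, using the $(k+1)$-block $D_{k+1}(\rho)$ one reads off the diagonal of $Q = F^t\bar F$ from the block structure exactly as in \eqref{eqn:variances}), so that $N_{F(\gamma)} \ge \gamma^{-2} \to \infty$. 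The matrix $F(\gamma)$ is of canonical form $F^{(+1)}_\rho$ up to relabelling of basis vectors, so all the results of Sections~\ref{section:Weingarten}--\ref{section:integrate} and Theorem~\ref{thm:asymptotic-freeness-general} apply verbatim.

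Next I would identify the variances. For the generators $u_{ij}$ with $1 \le i \le k$ and $1 \le j \le 2k$, the block-matrix formula \eqref{eqn:variances} gives $N_{F(\gamma)} \langle u_{ij}|u_{ij}\rangle_L = Q^{-1}_{ii} = \rho_i^2$ and $N_{F(\gamma)}\langle u_{ij}|u_{ij}\rangle_R = Q_{jj}$, which equals $\rho_j^{-2}$ for $1 \le j \le k$ and $\rho_j^2$ for $k+1 \le j \le 2k$. Thus the rescaled variables $\sqrt{N_{F(\gamma)}}\,u_{ij}$ have exactly the left and right variances $\rho_i^2$ and the stated right variances, matching those prescribed for the $y_{ij}$. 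Since all indices $i$ under consideration satisfy $i \le k$ (the "first block row" of the decomposition \eqref{eqn:U-decomp}), condition~(3) of Theorem~\ref{thm:asymptotic-freeness-general} tells us the corresponding limiting variables are generalized circular elements, consistent with the family $\tilde{\mc S}$.

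Then I would invoke Theorem~\ref{thm:asymptotic-freeness-general} directly: for each $l \in 2\N$, $\epsilon:[l]\to\{1,\ast\}$ and $i,j:[l]\to[N]$ with all $i(r)\le k$, $j(r)\le 2k$, we get
\[
\Big|h_{O^+_{F(\gamma)}}\big(\textstyle\sqrt{N_{F(\gamma)}}u^{\epsilon(1)}_{i(1)j(1)}\cdots \sqrt{N_{F(\gamma)}}u^{\epsilon(l)}_{i(l)j(l)}\big) - \varphi\big(y^{\epsilon(1)}_{i(1)j(1)}\cdots y^{\epsilon(l)}_{i(l)j(l)}\big)\Big| \le \frac{D_l\,M_\gamma}{N_{F(\gamma)}},
\]
where $M_\gamma = \max_{\pi,\sigma}|\delta^{F(\gamma)}_\pi(j_\epsilon)\delta^{F(\gamma)}_\sigma(i_\epsilon)|\prod_r |t_{F(\gamma)}(i(r),j(r),\epsilon(r))|$. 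The subtle point — and the one place real care is needed — is that unlike in Theorem~\ref{thm:asymptotic-freeness-large-rank}, the quantity $M_\gamma$ is \emph{not} constant in the limiting parameter: the entries of $F(\gamma)$ include $\gamma$ and $\gamma^{-1}$, so $\delta^{F(\gamma)}_\pi$ and $t_{F(\gamma)}$ can blow up as $\gamma \to 0$. The main obstacle is therefore to show $M_\gamma/N_{F(\gamma)} \to 0$. This is where the restriction to the index set $1 \le i \le k$, $1 \le j \le 2k$ is essential: for these indices, the entries $F(\gamma)_{i(t)i(s)}$ and $F(\gamma)_{j(t)j(s)}$ appearing in $\delta^{F(\gamma)}_\pi(i_\epsilon), \delta^{F(\gamma)}_\pi(j_\epsilon)$ only ever involve the $\rho$-block of $F(\gamma)$ (the adjoint operation \eqref{eqn:adjoint} sends such $u_{ij}$ to scalar multiples of $u_{i'j'}$ with $i',j'$ again in the $\rho$-controlled range, the $t_F$ factors being products of $\rho_a,\rho_a^{-1}$), so $M_\gamma$ is in fact bounded by a constant depending only on $l$, $\rho$, $i$, $\epsilon$ — and crucially \emph{independent of $\gamma$}. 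Once this boundedness of $M_\gamma$ is established, since $N_{F(\gamma)} \ge \gamma^{-2} \to \infty$ we conclude the right-hand side tends to $0$, which is precisely convergence in distribution of $\tilde{\mc S}_{F(\gamma)}$ to $\tilde{\mc S}$. I would close by remarking that the companion family $\{y_{ij}\}$ exists and is free by construction (e.g.\ realized on a full Fock space as in Section~\ref{section:FAW}).
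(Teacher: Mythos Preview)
Your proposal is correct and follows essentially the same route as the paper's proof: match the variances (noting they are $\gamma$-independent), apply Theorem~\ref{thm:asymptotic-freeness-general}, and observe that the numerator in the error bound does not depend on $\gamma$ while $N_{F(\gamma)} \to \infty$. Your write-up is in fact more careful than the paper's on the one nontrivial point --- you explicitly explain \emph{why} the quantities $\delta^{F(\gamma)}_\pi(i_\epsilon)$, $\delta^{F(\gamma)}_\pi(j_\epsilon)$ and $t_{F(\gamma)}(i(r),j(r),\epsilon(r))$ involve only the $\rho$-block entries (since the adjoint operation \eqref{eqn:adjoint} keeps the shifted indices inside $\{1,\ldots,2k\}$), whereas the paper simply asserts that the numerator is ``fixed with respect to $\gamma$.''
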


\begin{rem} \label{rem:nolimit} Note that Theorem \ref{thm:asymptotic-freeness-large-qrank}, makes a statement about the limiting distribution of a subset $\tilde{\mc S}_{F(\gamma)}$ of generators of $C(O^+_{F(\gamma)})$.  We cannot make a statement about the asymptotic freeness of the entire family of generators $\mc S_{F(\gamma)}$ in this setting since some of these variables do not have limiting distributions.  For example, 
\[ 
h_{O^+_{F(\gamma)}}(\sqrt{N_{F(\gamma)}}u_{k+1,k+1}^{\epsilon(1)}\sqrt{N_{F(\gamma)}}u_{k+1,k+1}^{\epsilon(2)}) = \Bigg\{ \begin{matrix} \gamma^2 & \epsilon(1) = \ast, \ \epsilon(2) = 1 \\
\gamma^{-2} & \epsilon(1) = 1, \ \epsilon(2) = \ast,
\end{matrix}
\]
which implies that $\sqrt{N_{F(\gamma)}}u_{k+1,k+1}$ does not have a limiting distribution as $\gamma \to 0$.
\end{rem}

\begin{proof}[Proof of Theorem \ref{thm:asymptotic-freeness-large-qrank}]
By construction, the families $\tilde{\mc S}$ and $\tilde{\mc S}_{F(\gamma)}$ have the same left and right variances (which are independent of $\gamma$).  Therefore we may apply Theorem \ref{thm:asymptotic-freeness-general} to obtain, for any $l \in 2\N$, $\epsilon:[l]\to \{1,\ast\}$, $i:[l] \to [k]$, $j:[l] \to [2k]$,
\begin{align*}
&\Big|h_{O^+_{F(\gamma)}}(\sqrt{N_{F(\gamma)}}u_{i(1)j(1)}^{\epsilon(1)}\ldots \sqrt{N_{F(\gamma)}}u_{i(l)j(l)}^{\epsilon(l)}) - \varphi\big(y_{i(1)j(1)}^{\epsilon(1)}\ldots y_{i(l)j(l)}^{\epsilon(l)}\big)\Big|\\
 & \le \frac{D_l\max_{\pi, \sigma \in NC_2(l)}| \delta_\pi^{F(\gamma)}(j_\epsilon)\delta_\sigma^{F(\gamma)}(i_\epsilon)| \prod_{r=1}^l|t_{F(\gamma)}(i(r),j(r), \epsilon(r))|}{N_{F(\gamma)}}. 
\end{align*}
Since the numerator of the above expression is fixed with respect to $\gamma$ and $N_{F(\gamma)} =\gamma^2 + \gamma^{-2} + \sum_{i=1}^k (\rho_i^2+\rho_i^{-2}) \to \infty$ as $\gamma \to 0$, the theorem follows.
\end{proof}

\subsection{A remark on the free unitary case} \label{sect:unitary} 

Let $c = \pm 1$ and $F \in \text{GL}_N(\C)$ be a canonical $F$-matrix.  In this section we briefly comment on the free unitary quantum groups $U^+_F$.  

In this case, it is known from the fundamental work of Banica \cite{Ba} that there is an injective $\ast$-homomorphism $C(U_F^+)\hookrightarrow C(\T)*C(O^+_F)$ (the unital free product of $C(\T)$ and $C(O^+_F)$) given by $v_{ij} \mapsto wu_{ij}$.  Here, $w \in C(\T)$ is canonical unitary coordinate function on the unit circle $\T$.  Moreover, it is known that under the above embedding, $h_{U_F^+} = (\tau * h_{O^+_F})|_{C(U^+_F)}$, where $\tau$ denotes integration with respect to the Haar probability measure on $\T$.

In other words, the variables $\{v_{ij}\}_{1 \le i,j \le N} \subset (C(U^+_F), h_{U^+_F})$  and $\{wu_{ij}\}_{1 \le i,j \le N} \subset (C(\T)*C(O^+_F),\tau * h_{O^+_F})$ are identically distributed.  Using this observation, together with some basic facts about free independence and the results we have already obtained on $O^+_F$, we arrive at the following unitary version of Theorem \ref{thm:asymptotic-freeness-general},  whose proof we leave as an exercise to the reader.  (Note that the extra freeness inside $C(U^+_F)$ given by the above free product model  yields a slightly cleaner statement than Theorem \ref{thm:asymptotic-freeness-general}.)  

\begin{thm}\label{thm:asymptotic-freeness-general-unitary}
Fix a canonical deformation matrix \[F = F^{(c)}_\rho = \left(\begin{matrix}0 & D_k(\rho) &0 \\
cD_k(\rho)^{-1} &0 &0 \\
0 & 0 & 1_{N-2k}\end{matrix}\right) \in \text{GL}_N(\C),\] and let $\mc S = \{y_{ij}\}_{1 \le i,j \le N}$ be a family of non-commutative random variables in a NCPS $ (A, \varphi)$ with the following properties.
\begin{enumerate}
\item $\mc S$ is freely independent.
\item For each $i,j$, the elements $y_{ij} \in \mc S$ and  $\sqrt{N_F}v_{ij} \in C(U_F^+)$ have  the same left and right variances, given by the matrix entries of $N_F\Phi$ in equation \eqref{eqn:variances}. 
\item Each $y_{ij}$ is a generalized circular element.
\end{enumerate}
Then for each  $l \in 2\N$, there is a constant $D_l > 0$ such that 
\begin{align*}&\Big|h_{O^+_F}(\sqrt{N_F}v_{i(1)j(1)}^{\epsilon(1)}\ldots \sqrt{N_F}v_{i(l)j(l)}^{\epsilon(l)}) - \varphi\big(y_{i(1)j(1)}^{\epsilon(1)}\ldots y_{i(l)j(l)}^{\epsilon(l)}\big)\Big|\\
 & \le \frac{D_l\max_{\pi, \sigma \in NC_2(l)}| \delta_\pi^F(j_\epsilon)\delta_\sigma^F(i_\epsilon)| \prod_{r=1}^l|t_F(i(r),j(r), \epsilon(r))|}{N_F}, 
\end{align*}
for each $\epsilon:[l] \to \{1,\ast\}$ and $i, j:[l]\to [N]$. 
\end{thm}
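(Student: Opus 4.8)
The plan is to transfer everything to the free product model $C(\T)*C(O_F^+)$ via Banica's isomorphism. By the stated facts, the families $\{v_{ij}\}_{1\le i,j\le N}\subset(C(U_F^+),h_{U_F^+})$ and $\{wu_{ij}\}_{1\le i,j\le N}\subset(C(\T)*C(O_F^+),\tau*h_{O_F^+})$ are identically distributed, so it suffices to estimate $\ast$-moments of the rescaled variables $\sqrt{N_F}\,wu_{ij}$. Since $w\in C(\T)$ is a Haar unitary, it is a $(1,1)$-generalized circular element (equivalently: $\tau(w^{\epsilon(1)}\cdots w^{\epsilon(l)})$ counts non-crossing pairings in $NC_2^\epsilon(l)$ with the sign pattern alternating), and $w$ is free from the whole family $\{u_{ij}\}$ inside the free product. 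The first step is therefore to expand a word $\varphi_0(\sqrt{N_F}wu_{i(1)j(1)})^{\epsilon(1)}\cdots)$ in $(C(\T)*C(O_F^+),\tau*h_{O_F^+})$ using the freeness of $w$ from the $u$'s together with Theorem \ref{thm:mc-formulas}(2) applied to $w$: this produces a sum over $\pi\in NC_2^\epsilon(l)$ of products of the form $\prod_{(s,t)\in\pi}\tau(w^{\epsilon(s)}w^{\epsilon(t)})$ times mixed moments of the $u_{ij}$'s restricted to the blocks of $\pi$.

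Second, I would invoke the $O_F^+$ Weingarten machinery already developed: for each fixed $\pi$ the restricted $u$-moment is governed by Theorem \ref{thm:moments}, and Proposition \ref{prop:asymptotic_factorization} (together with Proposition \ref{prop:starmoments} / Corollary \ref{cor:asymptotic_factorization_star} handling adjoints) gives that, after multiplying by $N_F^{l/2}$, the word's moment equals the sum over $\pi\in NC_2(l)$ of products of pair-moments $h_{O_F^+}(u_{i(s)j(s)}u_{i(t)j(t)})$, with error bounded by the right-hand side of \eqref{asfact}. Combining the $w$-expansion with this, only the pairings $\pi$ compatible with the alternating structure of $\epsilon$ survive with a nonzero $\tau(w\cdot w)$ factor, and one reads off exactly that $N_F^{l/2}\,\varphi_0((wu_{i(1)j(1)})^{\epsilon(1)}\cdots)$ is, up to the same $O(N_F^{-1})$ error, equal to $\sum_{\pi\in NC_2^\epsilon(l),\ \ker(ij)\ge\pi}\prod_{(s,t)\in\pi}N_F h_{O_F^+}(u_{i(s)j(s)}^{\epsilon(s)}u_{i(t)j(t)}^{\epsilon(t)})$.

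Third, I would match this with the target: by hypothesis $\mc S$ is a free family of generalized circular elements with the prescribed left/right variances, so Theorem \ref{thm:mc-formulas}(2) computes $\varphi(y_{i(1)j(1)}^{\epsilon(1)}\cdots y_{i(l)j(l)}^{\epsilon(l)})$ as precisely $\sum_{\pi\in NC_2^\epsilon(l)}\prod_{(s,t)\in\pi}\varphi(y_{i(s)j(s)}^{\epsilon(s)}y_{i(t)j(t)}^{\epsilon(t)})$, and each pair-variance $\varphi(y\cdot y)$ equals $N_F$ times the corresponding $h_{O_F^+}$ pair-moment by condition (2). Hence the main term of the $w$-model word matches $\varphi(\cdots)$ exactly, and the difference is bounded by the displayed fraction. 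Tracking the constant $D_l$ through the (finitely many, $l$-dependent) applications of Corollary \ref{cor:asymptotic_factorization_star} over the blocks of the pairings is routine bookkeeping.

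The main obstacle I expect is the combinatorial alignment in the second and third steps: one must check that the $w$-induced constraint (pairs $(s,t)$ with $\epsilon(s)\ne\epsilon(t)$, i.e. $\pi\in NC_2^\epsilon(l)$) exactly cancels the pairings that would otherwise contribute in the pure-$O_F^+$ expansion, so that the generalized-circular pair structure on the $y_{ij}$ emerges with no leftover terms. In the tracial case this is transparent, but here the interplay between the sign pattern $\epsilon$, the factors $t_F(i(r),j(r),\epsilon(r))$ coming from \eqref{eqn:t_F}, and the Schur variances in \eqref{eqn:variances} needs to be handled carefully; once that is done, the error estimate is immediate from Corollary \ref{cor:asymptotic_factorization_star}.
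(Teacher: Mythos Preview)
Your overall strategy---transferring to the free product model $v_{ij}\sim wu_{ij}$ in $(C(\T)*C(O_F^+),\tau*h_{O_F^+})$ and then feeding the $u$-part into the already-established $O_F^+$ machinery---is exactly what the paper has in mind (the paper in fact leaves the details as an exercise after setting up precisely this model).

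There is, however, a genuine error in your Step~1. A Haar unitary $w$ is \emph{not} a $(1,1)$-generalized circular element: for instance $\tau(w^*ww^*w)=\tau(1)=1$, whereas for a standard circular element $c$ one has $\varphi(c^*cc^*c)=|NC_2^{(*,1,*,1)}(4)|=2$. So Theorem~\ref{thm:mc-formulas}(2) does not apply to $w$, and your proposed expansion ``sum over $\pi\in NC_2^\epsilon(l)$ of $\prod_{(s,t)\in\pi}\tau(w^{\epsilon(s)}w^{\epsilon(t)})$ times $u$-moments on the blocks of $\pi$'' is not a valid freeness formula. The alternating free cumulants of a Haar unitary are $\kappa_{2n}(w,w^*,\ldots)=(-1)^{n-1}C_{n-1}$, not supported only in degree two, so the combinatorics of ``eliminating $w$'' is more delicate than you suggest.

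There are two clean ways to repair this. One is to bypass the free product combinatorics entirely and use directly that for $U_F^+$ the fixed-vector spaces of $V^{\epsilon(1)}\otimes\cdots\otimes V^{\epsilon(l)}$ are indexed by $NC_2^\epsilon(l)$ (Banica), yielding a Weingarten formula for $h_{U_F^+}$ with Gram matrix $G_{l,F}|_{NC_2^\epsilon(l)\times NC_2^\epsilon(l)}$; then Theorem~\ref{thm:Weingartenasymptotics} applies verbatim to this submatrix and the rest of your Steps~2--3 go through. The other is to stay in the free product but compute honestly: write the word $(wu_{(1)})^{\epsilon(1)}\cdots(wu_{(l)})^{\epsilon(l)}$ out, collapse the $w^*w$'s arising at each $*\to 1$ transition, and apply the moment--cumulant formula with the true Haar-unitary cumulants; one then checks that the only non-crossing partitions contributing at leading order in $N_F$ pair the $w$-letters as $\kappa_2(w,w^*)$'s and force the $u$-pairings into $NC_2^\epsilon(l)$. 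Either route gives the stated bound; the obstacle you flagged in Steps~2--3 is not the real issue---it is the unjustified ``$w$ is circular'' shortcut in Step~1.
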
    

\section{$O^+_F$ and quantum symmetries of free Araki-Woods factors} \label{qsym}

In this final section we return to the free orthogonal quantum groups $O^+_F$ and investigate to what extent they can be regarded as quantum symmetries of free Araki-Woods factors.  Inspired by the fact that the free group factors $L(\F_N)$ admit $O_N^+$ as natural quantum symmetries \cite{Cu}, we are interested in finding canonical families of (non-tracial) non-commutative random variables $(x_1, \ldots, x_N)$ belonging to a 
NCPS $(A,\varphi)$ whose joint distribution is $O^+_F$-invariant in the following sense.

\begin{defn}
Let $(A,\varphi)$ be a NCPS and consider an $N$-tuple ${\bf x} = (x_1, \ldots, x_N) \subset A$.  Fix $F \in \text{GL}_N(\C)$ and let $U = [u_{ij}] \in M_N(C(O^+_F))$ be the fundamental representation of $O^+_F$.  We say that \textit{${\bf x}$ has an $O^+_F$-invariant joint distribution} (or, that ${\bf x}$ is \textit{$O^+_F$-rotatable}) if for any $l \in \N$, $i:[l] \to [N]$
and any $\epsilon:[l] \to \{1,\ast\}$, we have
\begin{align}\label{eqn:invariance} 
\sum_{j:[l]\to N} u_{i(1)j(1)}^{\epsilon(1)}u_{i(2)j(2)}^{\epsilon(2)} \ldots u_{i(l)j(l)}^{\epsilon(l)} \varphi\big(x_{j(1)}^{\epsilon(1)}x_{j(2)}^{\epsilon(2)} \ldots x_{j(l)}^{\epsilon(l)}\big)   = \varphi \big(x_{i(1)}^{\epsilon(1)}x_{i(2)}^{\epsilon(2)} \ldots x_{i(l)}^{\epsilon(l)}\big)1.
\end{align}
\end{defn}
 
We note that the existence of an $N$-tuple ${\bf x}$ with the above $O^+_F$-invariance property is connected to the existence of an \textit{action} of the quantum group $O^+_F$ on the von Neumann algebra generated by ${\bf x}$.

\begin{defn}\label{defn:action}
Let $\G$ be a compact quantum group with von Neummann algebra $L_\infty(\G) = \pi_h(C(\G))''$
and (extended) coproduct $\Delta_r:L_\infty(\G) \to L_\infty(\G) \overline{\otimes} L_\infty(\G)$.  Let $M$ a von Neumann algebra. 
\begin{enumerate} 
\item A \textit{left action} (or simply an \textit{action}) of $\G$ on $M$ is a normal, injective and unital $\ast$-homomorphism $\alpha: M \to L_\infty(\G) \overline{\otimes} M$ such that $(\iota \otimes \alpha) \circ\alpha = (\Delta \otimes \iota) \circ \alpha$.     We denote the action of $\G$ on $M$ by the notation $\G \curvearrowright^\alpha M$. 
\item If $\varphi$ is a faithful normal state on $M$, we say that an action $\G \curvearrowright^\alpha M$ is \textit{$\varphi$-preserving} if $(\iota \otimes \varphi) \circ \alpha = \varphi(\cdot)1_{L_\infty(\G)}$.  Such an action has notation $\G \curvearrowright^\alpha (M,\varphi)$. 
\end{enumerate}
\end{defn}

\begin{rem}
If ${\bf x} = (x_1, \ldots, x_N) \subset (A, \varphi)$ is an $N$-tuple with an $O^+_F$-invariant joint distribution, then it is easy to see that 
\[\alpha(x_i) = \sum_{j=1}^N \pi_h(u_{ij}) \otimes x_j \qquad (1 \le i \le n)
\] 
defines a $\varphi$-preserving action $O^+_F \curvearrowright^{\alpha} (W^*(x_1, \ldots, x_N) , \varphi)$, where $\pi_h:C(O^+_F) \to L^\infty(O^+_F)$ is the GNS representation associated to the Haar state. 
\end{rem}

We now show that such $O^+_F$-invariant non-commutative random variables exist for any $F$.
To do this, we first need a lemma. 

\begin{lem} \label{lem:Q-relation}Let $Q = F^t\bar{F}$, where $F \in \text{GL}_N(\C)$ is a canonical $F$ matrix (so that, in particular, $Q$ is diagonal), and let $U =[u_{ij}] \in M_N (C(O^+_F))$ be the fundamental representation of $O^+_F$.  Then  \[\sum_{r = 1}^N u_{ir}u_{jr}^* = \delta_{ij}1\quad  \text{and} \quad \sum_{r=1}^N u_{ir}^*u_{jr}(Q^{-1})_{rr} = \delta_{ij}(Q^{-1})_{ii}1 \qquad (1 \le i,j \le N).\]  
\end{lem}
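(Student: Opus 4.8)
The plan is to verify both identities by first establishing them at the level of the defining relations on $U$ and then using the block structure of a canonical $F$-matrix (equivalently, the explicit form of $Q$) to identify the correct scalars. For the first identity, I would recall that $U = [u_{ij}]$ is a unitary matrix in $M_N(C(O^+_F))$, so $UU^* = I$ already gives $\sum_{r} u_{ir}u_{jr}^* = \delta_{ij}1$ directly — this is the easy half and requires no use of the $F$-structure at all. The content is really in the second identity, which expresses a twisted ``unitarity'' of the transpose coming from the relation $U = F\bar U F^{-1}$.

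For the second identity, I would start from the relation $F\bar U F^{-1} = U$, equivalently $F \bar U = U F$, and combine it with unitarity of $U$. Writing $\bar U = F^{-1} U F$, unitarity of $\bar U$ (which holds since $U$ is unitary and entrywise conjugation preserves unitarity) gives $(F^{-1}UF)^*(F^{-1}UF) = I$, i.e. $F^* U^* (F^{-1})^* F^{-1} U F = I$. Recalling from Section~\ref{section:integrate} that $Q = F^t\bar F = \overline{F^*F}$ and $Q^{-1} = FF^*$, and that for a canonical $F$-matrix $Q$ is diagonal with real positive entries, one rearranges $(F^{-1})^*F^{-1} = (FF^*)^{-1} = Q$ (using $Q$ diagonal real, so $\bar Q = Q$), obtaining $U^* Q U = (F^*)^{-1} F^{-1} = \overline{Q^{-1}}$... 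I would be careful here with conjugates, but since $Q$ is diagonal real all the bars disappear and one lands on the matrix identity $U^* Q^{-1} U = Q^{-1}$ (or its inverse-transpose variant), whose $(i,j)$ entry is exactly $\sum_r u_{ir}^* u_{jr}(Q^{-1})_{rr} = \delta_{ij}(Q^{-1})_{ii}1$. An alternative, perhaps cleaner route is to apply entrywise conjugation to the identity $\bar U \bar U^* = I$ and then substitute $\bar U = F^{-1}UF$; since $F^{-1} = cF$ for canonical $F$, everything reduces to bookkeeping with the explicit $2\times 2$ block entries and the diagonal matrix $D_k(\rho)$.

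The main obstacle I expect is purely the conjugation/transpose bookkeeping: one must be vigilant about whether $Q = F^t\bar F$ or $FF^*$ appears, and about the fact that these two are inverse to one another (as stated in Section~\ref{section:integrate}), so that an identity involving $Q^{-1}_{rr}$ as written is the correct normalization rather than $Q_{rr}$. Because $F$ is canonical, $Q$ is a real positive diagonal matrix, which kills all the genuinely annoying non-commutativity of matrix conjugation and lets the verification collapse to the scalar statement claimed; so once the right intertwining identity $\bar U = F^{-1}UF$ is in hand and combined with $U^*U = I$, the rest is a short computation. I would present it by: (i) stating $UU^* = I$ gives the first identity; (ii) deriving $\bar U = F^{-1}UF$ from the defining relation; (iii) using $\overline{UU^*} = \bar U \bar U^* = I$ together with (ii) and $Q^{\pm 1}$ as above to read off the second identity entrywise.
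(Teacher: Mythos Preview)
Your treatment of the first identity is fine: $UU^*=I$ gives it immediately.

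For the second identity, however, there is a genuine error. You claim that ``unitarity of $\bar U$ holds since $U$ is unitary and entrywise conjugation preserves unitarity.'' This is \emph{false} for matrices over a non-commutative $*$-algebra: in general $\overline{AB}\ne \bar A\,\bar B$, because $(\sum_k a_{ik}b_{kj})^*=\sum_k b_{kj}^*a_{ik}^*$, not $\sum_k a_{ik}^*b_{kj}^*$. Hence $\bar U\bar U^*$ (whose $(i,j)$-entry is $\sum_k u_{ik}^*u_{jk}$) is \emph{not} forced to be $I$ by the unitarity of $U$. In fact, the very content of the lemma is that $\bar U$ is not unitary but only $Q$-twisted unitary. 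If you push your computation through honestly, $(F^{-1}UF)^*(F^{-1}UF)=I$ yields $U^*QU=Q$; combined with $U^*U=I$ this would force $QU=UQ$, i.e.\ $Q_{ii}u_{ij}=Q_{jj}u_{ij}$, which is absurd unless $Q$ is scalar. So the input identity you are using simply does not hold. There is also a bookkeeping slip: even if $U^*Q^{-1}U=Q^{-1}$ were available, its $(i,j)$-entry is $\sum_r u_{ri}^*(Q^{-1})_{rr}u_{rj}$ (a sum over the \emph{row} index of $U$), not the column sum $\sum_r u_{ir}^*u_{jr}(Q^{-1})_{rr}$ appearing in the lemma. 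The matrix identity you actually need is
\[
\bar U\,Q^{-1}\,U^t=Q^{-1},
\]
whose $(i,j)$-entry is precisely $\sum_r u_{ir}^*u_{jr}(Q^{-1})_{rr}$.

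The paper obtains this identity by quoting the general compact quantum group fact $\bar U\,\overline{Q}^{-1}U^t=\overline{Q}^{-1}$ from \cite[Proposition~3.2.17]{Ti}. If you want to stay within the defining relations, a correct elementary route is: from $\bar U=F^{-1}UF$ take adjoints to get $U^t=F^*U^*(F^*)^{-1}$, and then compute, using $Q^{-1}=FF^*$ and, for canonical $F$, $F^2=(F^t)^2=c1$:
\[
\bar U\,Q^{-1}\,U^t=(F^{-1}UF)(FF^*)(F^*U^*(F^*)^{-1})=F^{-1}U\big(F^2(F^*)^2\big)U^*(F^*)^{-1}=F^{-1}(F^*)^{-1}=(F^*F)^{-1}=Q^{-1}.
\]
This uses \emph{both} $\bar U=F^{-1}UF$ and its adjoint, not the (false) unitarity of $\bar U$.
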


\begin{proof}
The first equality is a direct consequence of the fact that the fundamental representation $U$ is unitary.  To prove the second inequality, we use \cite[Proposition 3.2.17]{Ti} which shows that 
$\bar{U} \overline{Q}^{-1}U^t = \overline{Q}^{-1}$.  Since $Q$ is diagonal and positive definite, the result follows.
\end{proof}

In the case where $F \in \text{GL}_N(\C)$ is canonical, the way to find $O^+_F$-rotatable generators of a free Araki-Woods factor is to fix a column of the fundamental representation of $O^+_F$ and take an $N$-tuple of freely independent generalized circular elements with the same left and right variances as this column (up to a common non-zero scaling factor).  Again,
we note that after completion of a first draft of this paper, it was pointed out to the authors that the following theorem can also be obtained as a consequence of \cite[Proposition 3.1]{Va05}.      

\begin{thm} \label{thm:invariance-canonicalF}
Let $F \in \text{GL}_N(\C)$ be a canonical $F$ matrix and let $Q = F^t\bar{F}$ with diagonal entries $(Q_{ii})_{i=1}^N$.  Let ${\bf x} = (x_1, \ldots, x_N) \subset (A, \varphi)$ be a $\ast$-free family of generalized circular elements with left and right covariances given by 
\[
\varphi(x_i^*x_i) = Q_{ii}^{-1}, \quad \varphi(x_ix_i^*) = 1, \qquad (1 \le i \le N). 
\]
Then ${\bf x}$ has an $O^+_F$-invariant joint $\ast$-distribution.  In other words, there is a $\varphi$-preserving action $O^+_F \curvearrowright^\alpha(W^*(x_1, \ldots, x_N), \varphi)$
given by $\alpha(x_i) = \sum_{j=1}^N\pi_h(u_{ij}) \otimes x_j$, where $U = [u_{ij}]$ is the fundamental representation of $O^+_F$.   
\end{thm}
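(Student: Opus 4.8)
The plan is to verify the invariance identity \eqref{eqn:invariance} directly, by expanding both sides using the combinatorial description of the joint $\ast$-distribution of a free generalized circular family (Theorem \ref{thm:mc-formulas}(2)) and comparing them via the structure of $\mathrm{Fix}(U^{\otop l})$. First I would record the $\ast$-moments of ${\bf x}$: since the $x_i$ are $\ast$-free generalized circular elements with $\varphi(x_i^*x_i) = Q_{ii}^{-1}$ and $\varphi(x_i x_i^*) = 1$, Theorem \ref{thm:mc-formulas}(2) gives, for $r\from[l]\to[N]$ and $\epsilon\from[l]\to\{1,\ast\}$ with $l$ even,
\[
\varphi\big(x_{r(1)}^{\epsilon(1)}\cdots x_{r(l)}^{\epsilon(l)}\big) = \sum_{\substack{\pi \in NC_2^\epsilon(l)\\ \ker r \ge \pi}} \prod_{(s,t)\in\pi} \varphi\big(x_{r(s)}^{\epsilon(s)} x_{r(t)}^{\epsilon(t)}\big),
\]
and the pair moments are $\varphi(x_a^{\epsilon(s)} x_a^{\epsilon(t)}) = Q_{aa}^{-1}$ if $(\epsilon(s),\epsilon(t)) = (\ast,1)$ and $=1$ if $(\epsilon(s),\epsilon(t)) = (1,\ast)$; odd moments vanish. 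In particular each nonzero pair moment is $Q_{aa}^{-e(s,t)}$ where $e(s,t) \in \{0,1\}$ records whether the left leg carries a $\ast$.

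Next I would substitute this into the left-hand side of \eqref{eqn:invariance}. Summing over $j\from[l]\to[N]$, the constraint $\ker j \ge \pi$ means $j$ is constant on blocks of $\pi$, so the sum factors over blocks: for each pairing $\pi\in NC_2^\epsilon(l)$ the $j$-sum becomes $\sum_j \big(\prod_r u_{i(r)j(r)}^{\epsilon(r)}\big)\prod_{(s,t)\in\pi} Q_{j(s)j(s)}^{-e(s,t)}$ with $j$ constant on each pair. The key point is then to recognize, for a single pair $(s,t)\in\pi$ joining positions with opposite decorations, that $\sum_{a=1}^N u_{i(s)a}^{\epsilon(s)} (\cdots) u_{i(t)a}^{\epsilon(t)} Q_{aa}^{-e(s,t)}$ collapses via Lemma \ref{lem:Q-relation}: the case $(\epsilon(s),\epsilon(t)) = (1,\ast)$ with weight $Q_{aa}^0 = 1$ uses $\sum_a u_{ia}u_{ja}^* = \delta_{ij}1$, while the case $(\epsilon(s),\epsilon(t)) = (\ast,1)$ with weight $Q_{aa}^{-1}$ uses $\sum_a u_{ia}^* u_{ja} (Q^{-1})_{aa} = \delta_{ij}(Q^{-1})_{ii}1$. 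One must handle the nesting of non-crossing pairs: because $\pi$ is non-crossing, there is always an inner pair $(s,s+1)$ of \emph{adjacent} positions, and since the generators indexed by an inner pair's legs are adjacent in the word $u_{i(1)j(1)}^{\epsilon(1)}\cdots u_{i(l)j(l)}^{\epsilon(l)}$ only after we use that everything between them has already been contracted — so the contraction should be carried out by induction on $l$, peeling off innermost pairs one at a time, exactly mirroring the inductive collapse in the proof of Theorem \ref{thm:grammatrix}. Each collapse of an inner pair $(s,s+1)$ produces a Kronecker delta $\delta_{i(s)i(s+1)}$ (together with a factor $Q_{i(s)i(s)}^{-e}$ that matches the corresponding pair moment for ${\bf x}$), reducing the word of length $l$ to one of length $l-2$ with index function $i'$ obtained by deleting $s,s+1$, and simultaneously reducing $\pi$ to $\pi'\in NC_2^{\epsilon'}(l-2)$.

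After the full induction, the left-hand side of \eqref{eqn:invariance} becomes $\sum_{\pi\in NC_2^\epsilon(l)} \big(\prod_{(s,t)\in\pi}[\![i \text{ constant on } \{s,t\}]\!] \cdot Q_{\bullet}^{-e(s,t)}\big) \cdot 1$, which is precisely $\varphi(x_{i(1)}^{\epsilon(1)}\cdots x_{i(l)}^{\epsilon(l)}) 1$ by the moment formula above (the condition that $i$ be constant on each pair is exactly $\ker i \ge \pi$). This establishes \eqref{eqn:invariance}, hence the $O^+_F$-invariance of the joint $\ast$-distribution of ${\bf x}$. The final sentence of the theorem — that $\alpha(x_i) = \sum_j \pi_h(u_{ij})\otimes x_j$ is a $\varphi$-preserving action — then follows from the Remark preceding the theorem, once one checks $\alpha$ is well defined on $W^*(x_1,\dots,x_N)$: invariance of the joint distribution means $\alpha$ is isometric on $\ast$-polynomials for the GNS norm, so it extends to the von Neumann algebra, is normal, injective, unital, and the coaction identity $(\Delta_r\otimes\iota)\alpha = (\iota\otimes\alpha)\alpha$ is immediate from $\Delta(u_{ij}) = \sum_k u_{ik}\otimes u_{kj}$. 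The main obstacle I anticipate is bookkeeping in the inductive collapse: keeping track of which $Q$-weights attach to which leg of each pair and verifying that the adjacency structure of non-crossing pairings lets one always contract an innermost pair using exactly the two identities of Lemma \ref{lem:Q-relation} (and not some hybrid that the lemma does not cover) — in particular confirming that no pair $(s,t)\in\pi$ with $\epsilon(s) = \epsilon(t)$ ever arises, which is guaranteed since $\pi\in NC_2^\epsilon(l)$ by definition.
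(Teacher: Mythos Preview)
Your proposal is correct and follows essentially the same route as the paper: expand both sides via Theorem \ref{thm:mc-formulas}(2), then for each $\pi\in NC_2^\epsilon(l)$ iteratively collapse an innermost adjacent pair $(r,r+1)$ using the two identities of Lemma \ref{lem:Q-relation} (one for each ordering of $(\epsilon(s),\epsilon(t))$), reducing to the $l=2$ base case. The paper organizes this by first stating the $l=2$ identity \eqref{eqn:l=2} explicitly and then invoking it repeatedly, but the substance is identical to your inductive peeling argument.
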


\begin{proof}
We must verify \eqref{eqn:invariance} for the $N$-tuple ${\bf x}$, for each choice of $l \in \N$, $i:[l] \to [N]$, and $\epsilon:[l] \to \{1,\ast\}$.  To start, observe that when $l$ is odd or $|\epsilon^{-1}(1)| \ne |\epsilon^{-1}(\ast)|$, then both sides of \eqref{eqn:invariance} are always zero.  Therefore we assume  $l \in 2\N$ and  that $|\epsilon^{-1}(1)| =|\epsilon^{-1}(\ast)| = l/2$.  

We begin by considering the case $l=2$, and fix $1 \le i(1), i(2) \le N$, $\epsilon(1)\ne\epsilon(2) \in \{1,\ast\}$.  Then we have 
\begin{align*}
\sum_{j(1),j(2)=1}^N u^{\epsilon(1)}_{i(1)j(1)}u^{\epsilon(2)}_{i(2)j(2)}\varphi(x_{j(1)}^{\epsilon(1)}x_{j(2)}^{\epsilon(2)})
 &= \sum_{j(1)=1}^N u^{\epsilon(1)}_{i(1)j(1)}u^{\epsilon(2)}_{i(2)j(1)}\varphi(x_{j(1)}^{\epsilon(1)}x_{j(1)}^{\epsilon(2)}).
\end{align*}
If $\epsilon(1) = 1$ and $\epsilon(2) = \ast$, then $\varphi(x_{j(1)}x_{j(1)}^*) = 1$ and the above quantity equals 
\[
 \sum_{j(1)=1}^N u_{i(1)j(1)}u^{\ast}_{i(2)j(1)} = \delta_{i(1),i(2)}1 = \varphi(x_{i(1)}x_{i(2)}^*)1  \qquad (\text{by Lemma \ref{lem:Q-relation}}).
\]
If $\epsilon(1) = \ast$ and $\epsilon(2) = 1$, then $\varphi(x_{j(1)}^*x_{j(1)}) = Q_{j(1)j(1)}^{-1}$ and the above quantity equals  
\[
 \sum_{j(1)=1}^N u_{i(1)j(1)}^{\ast}u_{i(2)j(1)}Q_{j(1)j(1)}^{-1} = \delta_{i(1),i(2)} \varphi(x_{i(1)}^*x_{i(1)})1 =  \varphi(x_{i(1)}^*x_{i(2)})1  \qquad (\text{by Lemma \ref{lem:Q-relation}}).
\]
In each case, we obtain 
\begin{align} \label{eqn:l=2}
\sum_{j(1),j(2)=1}^N u^{\epsilon(1)}_{i(1)j(1)}u^{\epsilon(2)}_{i(2)j(2)}\varphi(x_{j(1)}^{\epsilon(1)}x_{j(2)}^{\epsilon(2)})
= \varphi(x_{i(1)}^{\epsilon(1)}x_{i(2)}^{\epsilon(2)})1.
\end{align}
Now let $2 < l \in 2\N$ and fix $\epsilon:[l] \to \{1,\ast\}$ and $i:[l] \to [N]$.  Then we have
from Theorem \ref{thm:mc-formulas}
\begin{align*}
&\sum_{j:[l]\to N} u_{i(1)j(1)}^{\epsilon(1)}u_{i(2)j(2)}^{\epsilon(2)} \ldots u_{i(l)j(l)}^{\epsilon(l)} \varphi\big(x_{j(1)}^{\epsilon(1)}x_{j(2)}^{\epsilon(2)} \ldots x_{j(l)}^{\epsilon(l)}\big)\\
&= \sum_{j:[l]\to N} u_{i(1)j(1)}^{\epsilon(1)}u_{i(2)j(2)}^{\epsilon(2)} \ldots u_{i(l)j(l)}^{\epsilon(l)} \Big(\sum_{\pi \in NC_2^\epsilon(l)} \prod_{(s,t) \in \pi} \varphi(x_{j(s)}^{\epsilon(s)}x_{j(t)}^{\epsilon(t)}) \Big)\\
&=  \sum_{\pi \in NC_2^\epsilon(l)} \Big(\sum_{j:[l]\to N} u_{i(1)j(1)}^{\epsilon(1)}u_{i(2)j(2)}^{\epsilon(2)} \ldots u_{i(l)j(l)}^{\epsilon(l)} \prod_{(s,t) \in \pi} \varphi(x_{j(s)}^{\epsilon(s)}x_{j(t)}^{\epsilon(t)})\Big).
\end{align*}  
Fix $\pi \in NC_2^\epsilon(l)$ and consider the internal sum above.  Since $\pi$ is non-crossing, it contains a neighboring pair $(r,r+1)$.  Applying \eqref{eqn:l=2} to the partial sum over $1 \le j(r),j(r+1) \le N$, we obtain
\begin{align*}
&\sum_{j:[l] \to [N]} u_{i(1)i(1)}^{\epsilon(1)}  u_{i(2)i(2)}^{\epsilon(2)} \ldots  u_{i(l)i(l)}^{\epsilon(l)} \prod_{(s,t) \in \pi} \varphi(x_{j(s)}^{\epsilon(s)}x_{j(t)}^{\epsilon(t)}) \\
&=\varphi(x_{i(r)}^{\epsilon(r)}x_{i(r+1)}^{\epsilon(r+1)})  \\ 
&\quad \times \Big(\sum_{j:[l]\backslash\{r,r+1\}\to [N]}u_{i(1)i(1)}^{\epsilon(1)} \ldots u_{i(r-1)i(r-1)}^{\epsilon(r-1)}u_{i(r+2)i(r+2)}^{\epsilon(r+2)} \dots u_{i(l)i(l)}^{\epsilon(l)}\prod_{(s,t) \in \pi\backslash (r,r+1)} \varphi(x_{j(s)}^{\epsilon(s)}x_{j(t)}^{\epsilon(t)}) \Big).
\end{align*}
Repeatedly applying the same principle to this new internal sum of lower order (note that $\pi\backslash (r,r+1)$ is again non-crossing), we obtain (after a total of $l/2-1$ steps)  
\begin{align}\label{eqn:l}
&\sum_{j:[l] \to [N]} u_{i(1)i(1)}^{\epsilon(1)}  u_{i(2)i(2)}^{\epsilon(2)} \ldots  u_{i(l)i(l)}^{\epsilon(l)} \prod_{(s,t) \in \pi} \varphi(x_{j(s)}^{\epsilon(s)}x_{j(t)}^{\epsilon(t)}) = \prod_{(s,t) \in \pi}   \varphi(x_{i(s)}^{\epsilon(s)}x_{i(t)}^{\epsilon(t)})1.
\end{align}
Therefore 
\begin{align*}
&\sum_{j:[l]\to N} u_{i(1)j(1)}^{\epsilon(1)}u_{i(2)j(2)}^{\epsilon(2)} \ldots u_{i(l)j(l)}^{\epsilon(l)} \varphi\big(x_{j(1)}^{\epsilon(1)}x_{j(2)}^{\epsilon(2)} \ldots x_{j(l)}^{\epsilon(l)}\big)\\
&= \sum_{\pi \in NC_2^\epsilon(l)} \prod_{(s,t) \in \pi}   \varphi(x_{i(s)}^{\epsilon(s)}x_{i(t)}^{\epsilon(t)})1 = \varphi(x_{i(1)}^{\epsilon(1)}x_{i(2)}^{\epsilon(2)} \ldots x_{i(l)}^{\epsilon(l)})1.
\end{align*}
\end{proof}

\begin{rem}
It is clear that $(W^*(x_1, \ldots, x_N), \varphi) \cong (\Gamma(\mc H_\R, U_t)'', \varphi_\Omega)$ is a free Araki-Woods factor associated to a finite dimensional orthogonal representation $(U_t)_{r \in \R}$  (compare with Theorem \ref{thm:repfromgenerators}).  Moreover, it is interesting to note that the type classification for the von Neumann algebras $L^\infty(O^+_F)$ and $\Gamma(H_\R, U_t)''$ is the same.  More precisely, if $\Gamma < \R^*_+$ is the subgroup generated by the eigenvalues of $Q \otimes Q^{-1}$, then both of these algebras are type II$_1$ when $Q = 1$, III$_\lambda$ if $\Gamma = \lambda^\Z$, and type III$_1$ otherwise.  Compare \cite[Theorem 6.1]{Sh} and \cite[Theorem 7.1]{VaVe}.
\end{rem}

\begin{rem} 
Theorem \ref{thm:invariance-canonicalF} only considers the case of a canonical matrix $F$.  For generic $F \in \text{GL}_N(\C)$ such that $F\bar F = c 1$, recall from Section \ref{section:canonicalF} that there is a canonical $F$-matrix $F^{(c)}_{\rho} \in \text{GL}_N(\C)$ and $v \in \mc U_N$ such that $F^{(c)}_{\rho} = vF v^t$ and $O^+_F \cong O^+_{F^{(c)}_{\rho}}$.  Then $O^+_F \curvearrowright^{\alpha_F} (\Gamma(\mc H_\R, U_t)'', \varphi_\Omega)$, where $\Gamma(\mc H_\R, U_t)''$ is the free Araki-Woods factor on which $O^+_{F^{(c)}_{\rho}}$ acts in the sense of the above theorem.  Indeed let ${\bf x} = (x_1, \ldots, x_N)$ be the generalized circular system constructed in Theorem \ref{thm:invariance-canonicalF}, let $\alpha_{F^{(c)}_{\rho}}$ be the corresponding action, and let ${\bf y} = v{\bf x}$.  Then $W^*({\bf x}) = W^*({\bf y})$ and one readily checks from the defining relations that \[W = vUv^*,\]
where $W = [w_{ij}]$ and $U = [u_{ij}]$ are the fundamental representations of $O^+_F$ and $O^+_{F^{(c)}_{\rho}}$, respectively.  A simple calculation then shows that condition \eqref{eqn:invariance} holds with the $w_{ij}$'s replacing the $u_{ij}$'s and the $y_i$'s replacing the $x_i$'s.
\end{rem}

\end{document}